\newtheorem{theorem}{Theorem}[section]
\newtheorem{lemma}[theorem]{Lemma}
\newtheorem{proposition}[theorem]{Proposition}
\newtheorem{corollary}[theorem]{Corollary}
\theoremstyle{definition}
\newtheorem{definition}[theorem]{Definition}
\theoremstyle{remark}
\numberwithin{equation}{section}
\numberwithin{equation}{section}
\begin{document}

%
%
%
%
%
%
%
%
%

\title{On the Continuous embeddings between the fractional Haj{\l}asz-Orlicz-Sobolev spaces}

\author{Azeddine BAALAL}
\address{{Hassan 2 University, Department of Mathematics and Computer Science},
	{Ain Chock Faculty}, 
	{Casablanca},
	{Morocco}}
\email{abaalal@gmail.com}

\author{Mohamed BERGHOUT}
\address{{Laboratory of Partial Differential Equations, Algebra and Spectral Geometry, Higher School of Education and Training, Ibn Tofail University, P.O.Box 242-Kenitra 14000}, {Kenitra}, {Morocco}}
\email{Mohamed.berghout@uit.ac.ma; moh.berghout@gmail.com}

 \author{EL-Houcine OUALI}
 \address{{Hassan 2 University, Department of Mathematics and Computer Science},
 	{Ain Chock Faculty}, 
 	{Casablanca},
 	{Morocco}}
 \email{oualihoucine4@gmail.com}

\subjclass{46E35,\; 30L99.}

\keywords{Sobolev spaces,\; Orlicz spaces,\; Haj{\l}asz-Orlicz-Sobolev spaces,\; Fractional spaces,\; Metric-measure spaces,\; Sobolev embeddings.}
\date{}

\begin{abstract}
	  Let $G$ be an Orlicz function and let $ \alpha, \beta, s$ be positive real numbers. Under certain conditions on the Orlicz function $ G $, we establish some continuous embeddings results between the fractional order Orlicz-Sobolev spaces defined on metric-measure spaces $W_s^{\alpha, G}(X, d, \mu)$ and the fractional Haj{\l}asz-Orlicz-Sobolev spaces $M^{\beta, G}(X,d,\mu)$.
\end{abstract}

\maketitle

\section{Introduction} 
A metric-measure space $ (X,d,\mu)$ is a metric space $ (X, d) $ with a Borel measure $ \mu $ such that $ 0 < \mu (B(x,r))  < \infty  $ for all $ x \in X $ and all $ r \in \left( 0,\infty\right)$. We will always assume
that metric spaces have at least two points. Sobolev spaces on metric measure spaces have been studied during the last two decades see e.g \cite{BaBj2011,Hp1996,HpKp2000}. The theory was generalized to Orlicz-Sobolev spaces on metric-measure spaces  in \cite{An2004, Ht2012, Mm2010, PlKaJoFs2012, Th2005}. These spaces play an important role in the so-called area of strongly nonlinear potential theory \cite{An2004}. Continuous and compact embeddings play a central role in the theory of Sobolev spaces. Those issues have been investigated by many researchers, we refer to \cite{AdHl1999, Hebey, di2012hitchhikers, zhou2015fractional, ArGpHp2020, Hp1996, Hp2003, karak2019measure, karak2020lower, GpSa2022}. 

The objective of this paper is to study the continuous embeddings of the fractional Haj{\l}asz-Orlicz-Sobolev spaces.

Let $(X,d,\mu)$ be a metric-measure space and $ G $ be an Orlicz function (cf. Section \ref{sec2} for precise definition). We define the Orlicz space $L^G(X, \mu)$ as follows
\begin{equation*}
L^G(X, \mu):=\left\lbrace u: X\rightarrow \mathbb{R} \textit{ measurable; } \Phi_{G}(u)< +\infty \right\rbrace, 
\end{equation*} 
where the modular $\Phi_{G}$ is defined as follows \begin{equation*}
\displaystyle \Phi_{G}(u):=\int_{X}G(|u(x)|)d \mu(x).
\end{equation*}
The Orlicz space $L^G(X, \mu)$  is a Banach space with the following norm, called the Luxemburg norm \begin{equation*}
\|u\|_{L^G(X, \mu)}:=\inf \left\lbrace \lambda >0 ; \Phi_{G}\left( \frac{u}{\lambda}\right) \leq 1 \right\rbrace.
\end{equation*}
Let $ \alpha,s>0$, we define the fractional order Orlicz-Sobolev space  $W_s^{\alpha, G}(X, d, \mu)$ as follows 
\begin{equation*}
W_s^{\alpha, G}(X, d, \mu):=\left\lbrace  u\in L^G(X, \mu); \Phi^{\alpha,G}_{s}(u)<+\infty \right\rbrace, 
\end{equation*}
where the modular $\Phi^{\alpha,G}_{s}$ is defined as 
\begin{equation*}
\displaystyle \Phi^{\alpha,G}_{s}(u):=\iint_{0<d(x, y)<1} G\left( \frac{|u(x)-u(y)|}{ d(x, y)^{\alpha}}\right) \frac{d \mu(y) d \mu(x)}{d(x, y)^{s}}.
\end{equation*}
The space  $W_s^{\alpha, G}(X, d, \mu)$ is a Banach space with the following norm 
\begin{equation}\label{1eq3}  
\|u\|_{W_s^{\alpha, G}(X, d, \mu)}:=\|u\|_{L^G(X, \mu)}+[u]_{W_s^{\alpha, G}(X, d, \mu)},
\end{equation}\label{1eq4}
where $[.]_{W_s^{\alpha, G}(X, d, \mu)}$ is the Gagliardo seminorm, defined by
\begin{equation}
[u]_{W_s^{\alpha, G}(X, d, \mu)}:=\inf\left\lbrace \lambda>0; \Phi^{\alpha,G}_{s}\left( \frac{u}{\lambda}\right) \leq 1 \right\rbrace.
\end{equation}
Furthermore, we say that $u\in \dot{W}_s^{\alpha, G}(X,d,\mu)$ if $u$ is finite almost everywhere and $[u]_{W_s^{\alpha, G}(X,d,\mu)}<\infty$. 

Let $\beta >0$, we say that a function $v\in L^{G}(X,\mu)$ belongs to the fractional Haj{\l}asz-Orlicz-Sobolev space $M^{\beta, G}(X,d,\mu)$ if there exists a non-negative function $h \in L^G(X,\mu)$, called a generalized $\beta$-gradient of $v$, such that
\begin{equation*}
\left| v(x)-v(y)\right| \leq d^{\beta}(x, y)(h(x)+h(y)) \quad \text{for} \ \mu\text{-almost all} \ x, y \in X.
\end{equation*}
We equip the space $M^{\beta, G}(X,d,\mu)$ with the norm
\begin{equation*}
\left\| v\right\| _{M^{\beta, G}(X,d,\mu)}:=\|v\|_{L^G(X, \mu)}+\inf \|h\|_{L^G(X, \mu)},
\end{equation*}
where the infimum is taken over all the generalized $\beta$-gradients of $v$. When $ G(t)=t^{p}$ with $ p>1$, those spaces were introduced by Haj{\l}asz for $\alpha=1$ (see \cite{Hp1996}) and by Yang  for $\alpha \neq 1$ (see \cite{Yd2003}). For more details on these spaces, we refer to \cite{ArGpHp2020, Hp2003, HpKp2000, Hp1996,ZxSq2022,MjOw2022,Yd2003}. 

The classical Sobolev embedding theorems for $ W^{s,p}(\mathbb{R}^{n})$ have different character when $ sp < n $, $ sp = n $, or $ sp > n $, see \cite{di2012hitchhikers}. Therefore, in the metric-measure context, in order to prove embedding theorems, we need a condition that would be the counterpart of the dimension of the space. It turns out that such a condition is provided by the lower bound and upper bound for the growth of the measure.\\ 
We say that the metric-measure space $(X, d, \mu)$ is lower Ahlfors $s$-regular if there exists some positive constant $b$ such that 
\begin{equation}\label{1eq1}
\mu(B(z, r)) \geq \frac{1}{b} r^s \text { for } r \in(0,1], z \in X .
\end{equation}
On the other hand, we say that $(X, d, \mu)$ is upper Ahlfors $s$-regular if there is some positive constant $b$ such that 
\begin{equation}\label{1eq2}
\mu(B(z, r)) \leq b r^s \text { for } r \in(0,1], z \in X.
\end{equation}
 For example, let $\theta>2$ and
\begin{equation*}
\Omega:=\left\{(x, y) \in \mathbb{R}^2: 0<y<1 \text { and }|x|<y^{\theta-1}\right\}.
\end{equation*}
Then the metric-measure space $(\Omega,d_{\mathbb{R}^{2}},\lambda_{2})$ is lower $\theta$-regular, see \cite{GpSa2022}. Here we regard $\Omega$ as a metric-measure space with the Euclidean metric $d_{\mathbb{R}^{2}}$, and the Lebesgue measure $\lambda_{2}$.

The first main result of this paper is the H\"{o}lder regularity for the Sobolev functions when $ \alpha p_{0}>s $, see Theorem \ref{3theo1} and Theorem \ref{3theo2}. The second main result of our paper is a continuous embedding between the spaces $W_s^{\alpha, G}(X, d, \mu)$ and $M^{\beta, G}(X,d,\mu)$, see Theorem \ref{3theo3}.  
		 
This paper is organized as follows. In Section \ref{sec2}, we give some useful properties for Orlicz functions. The proof of the main results is given in Section \ref{sec3}.
\section{Orlicz functions and some preliminaries results}\label{sec2}
In this section, we prove several useful properties of the Orlicz functions, which will be used throughout this paper. We start by recalling the definition of the well-known Orlicz functions.
\begin{definition}
A function $G: \mathbb{R}_{+} \rightarrow \mathbb{R}_{+}$ is called an Orlicz function if it has the following properties :
\begin{itemize}
	\item[$\left(H_1\right)$] $G$ is continuous, convex, increasing and $G(0)=0$. 
	\item[$\left(H_2\right)$] $G$ satisfies the $\Delta_2$ condition, that is, there exists $C>2$ such that $G(2 x) \leq C G(x)$ for all $x \in \mathbb{R}_{+}$. 
	\item[$\left(H_3\right)$] $G$ is super-linear at zero, that is $\displaystyle\lim _{x \rightarrow 0} \frac{G(x)}{x}=0$. 
\end{itemize}
\end{definition}
It is easy to check that Orlicz functions fulfill the following basic properties. See \cite{BjSa2019} for a proof of these facts. 
\begin{itemize}
	\item[$\left(P_1\right)$]
	(Regularity) $G$ is Lipschitz continuous.
	\item[$\left(P_2\right)$]  $G$ can be represented in the form    $\displaystyle G(x)=\int_0^x g(s) d s,$    
	
	where $g$ is a non-decreasing right continuous function.
	\item [$\left(P_3\right)$] (Subadditivity) Given $a, b \in \mathbb{R}_{+}$, 
	$
	G(a+b) \leq \frac{C}{2}(G(a)+G(b))
	$,  where $C >0$ is the constant in the $\Delta_2$ condition. 
	\item[$\left(P_4\right)$]  For any $0<b<1$ and $a>0$, it holds $G(a b) \leq b G(a)$. 
\end{itemize}
A typical example of the Orlicz function is $ G(t)=t^{p} $ with $ p>1$, we refer to \cite{krasnosel1961convex} for more examples of Orlicz functions. 

Throughout this paper, we assume that 
\begin{equation} \label{2eq1}
1< p_0:=\inf _{s > 0} \frac{s g(s)}{G(s)} \leq p^0:=\sup _{s > 0} \frac{s g(s)}{G(s)}<+\infty . 
\end{equation}
\begin{lemma}\label{2lem6}
	Let $ G $ be an Orlicz function and assume that $(\ref{2eq1})$ is satisfied. Then the following inequalities hold.	
	\begin{equation} \label{1GG1}
	G(\gamma t) \geq \gamma^{p_{0}} G(t) \textit{  for all } t>0 \textit{  and all } \gamma \geq 1,
	\end{equation}
	
	\begin{equation} \label{1GG2}
	G(\gamma t) \geq \gamma^{p^{0}} G(t) \textit{  for all } t>0 \textit{  and all } \gamma \in(0,1],
	\end{equation}
	
	\begin{equation} \label{1GG3}
	G(\gamma t) \leq \gamma^{p^{0}} G(t)  \textit{ for all } t>0 \textit{ and all } \gamma \geq 1,
	\end{equation}
	
	\begin{equation} \label{1GG4}
	G(t) \leq \gamma^{p_{0}} G\left(\frac{t}{\gamma}\right) \textit{  for all } t>0 \textit{  and all }\gamma \in(0,1].
	\end{equation}
\end{lemma}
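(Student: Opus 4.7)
The plan is to derive all four inequalities from a single integral identity obtained from property $(P_2)$. Writing $G(s)=\int_0^s g(u)\,du$, one sees that $G$ is absolutely continuous with $G'(s)=g(s)$ at a.e.\ $s>0$, so the function $s\mapsto \log G(s)$ is absolutely continuous on every compact subinterval of $(0,\infty)$ with
\[
\frac{d}{ds}\log G(s)=\frac{g(s)}{G(s)}.
\]
The hypothesis \eqref{2eq1} rewrites as $\dfrac{p_0}{s}\le \dfrac{g(s)}{G(s)}\le \dfrac{p^0}{s}$ for a.e.\ $s>0$, which is the engine driving all four bounds.

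First I would treat the case $\gamma\ge 1$. Integrating the above differential inequality from $t$ to $\gamma t$ (an interval of positive length) yields
\[
p_0\log\gamma \;\le\; \log G(\gamma t)-\log G(t)\;\le\; p^0\log\gamma,
\]
and exponentiating gives simultaneously \eqref{1GG1} and \eqref{1GG3}. Second, for $\gamma\in(0,1]$ I would instead integrate from $\gamma t$ to $t$, obtaining
\[
p_0\log(1/\gamma) \;\le\; \log G(t)-\log G(\gamma t)\;\le\; p^0\log(1/\gamma),
\]
which, after exponentiating, yields $\gamma^{p^0}G(t)\le G(\gamma t)\le \gamma^{p_0}G(t)$. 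The left inequality is precisely \eqref{1GG2}. The right inequality, applied with $t$ replaced by $t/\gamma$, gives $G(t)\le \gamma^{p_0}G(t/\gamma)$, which is \eqref{1GG4}.

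The main point that deserves care is the justification of the differentiation $\frac{d}{ds}\log G(s)=\frac{g(s)}{G(s)}$ in an almost-everywhere sense, since $g$ is only assumed non-decreasing and right-continuous (it may have countably many jumps). This is handled by noting that $G$, being convex and continuous with $G(0)=0$, is absolutely continuous on every $[\varepsilon,T]\subset(0,\infty)$ with derivative $g$ a.e., and $G(s)>0$ for $s>0$ by $(H_3)$ combined with convexity; hence $\log G$ is absolutely continuous on such intervals and the chain rule applies. After this technicality, everything reduces to monotone integration of the bounds on $g(s)/G(s)$, so the four inequalities follow without further obstacle.
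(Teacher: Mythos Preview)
Your proof is correct and follows essentially the same route as the paper: both derive the inequalities by integrating the bounds $p_0/s\le g(s)/G(s)\le p^0/s$ for $\log G$ over $[t,\gamma t]$ (or $[\gamma t,t]$) and exponentiating. You are slightly more careful than the paper in justifying the absolute continuity of $\log G$ and in obtaining \eqref{1GG4} by the substitution $t\mapsto t/\gamma$, but the argument is the same.
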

For $\gamma \neq 1$, the proof of the lemma is given in \cite[Lemma~2.2]{ABSS2020}. For the convenience of the reader we present it here.
\begin{proof} If $\gamma=1$, the previous inequalities are trivial. Suppose that $\gamma \neq 1$, 
	since $p_{0} \leq \frac{t g(t)}{G(t)}$ for all $t>0$, it follows that for letting $\gamma >1$, we have
	$$
	\log \left(G(\gamma t)\right)-\log \left(G(t)\right)=\int_t^{\gamma t} \frac{g(\tau)}{G(\tau)} d \tau \geq \int_t^{\gamma t} \frac{p_{0}}{\tau} d \tau=\log \left(\gamma^{p_{0}}\right) .
	$$
	Thus, we deduce
	\begin{equation}\label{1G1}
	G(\gamma t) \geq \gamma^{p_{0}} G(t) \text { for all } t>0 \text { and } \gamma>1 .
	\end{equation}
	
	Now, we show that $G(\gamma t) \leq \gamma^{p^{0}} G(t) \quad$ for all $t>0$ and $\gamma >1$. 
	Indeed, since $p^{0} \geq \frac{t g(t)}{G(t)}$ for all $t>0$, it follows that for all $\gamma>1$, we have
	$$
	\log (G(\gamma t))-\log (G(t))=\int_t^{\gamma t} \frac{g(\tau)}{G(\tau)} d \tau \leq \int_t^{\gamma t} \frac{p^{0}}{\tau} d \tau=\log \left(\gamma^{p^{0}}\right).
	$$
	Thus, we deduce
	\begin{equation}\label{1G2}
	G(\gamma t) \leq \gamma^{p^{0}} G(t) \text { for all } t>0 \text { and } \gamma>1.
	\end{equation}
	
	By the same argument in the proof of \eqref{1G1} and \eqref{1G2}, we have 
	\begin{equation}
	G(\gamma t) \geq \gamma^{p^{0}} G(t) \textit{  for all  } t>0 \textit{  and } \gamma \in(0,1),
	\end{equation}
	and 
	\begin{equation}
	G(t) \leq \gamma^{p_{0}} G\left(\frac{t}{\gamma}\right) \textit{  for all  } t>0 \textit{ and } \gamma \in(0,1).
	\end{equation}
	This concludes the proof.
\end{proof}
\begin{proposition}\label{2pro1}
	Let  $s, \alpha>0$, $G$ be an Orlicz function and suppose that $(X, d, \mu)$ is an upper Ahlfors $\theta$-regular space, where $\alpha p^{0} <\theta-s$. Then, $W_s^{\alpha, G}(X,d,\mu)=L^G(X, \mu)$ with equivalent norms.
\end{proposition}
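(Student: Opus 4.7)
The inclusion $W_s^{\alpha,G}(X,d,\mu)\hookrightarrow L^G(X,\mu)$ is immediate from the definition, so the plan is to establish the reverse continuous inclusion by showing that the Gagliardo seminorm is dominated by the Luxemburg norm, namely
\begin{equation*}
[u]_{W_s^{\alpha,G}(X,d,\mu)}\leq K\,\|u\|_{L^G(X,\mu)}
\end{equation*}
for some constant $K$ depending only on $\alpha,s,\theta,p_0,p^0,b$ and the $\Delta_2$ constant $C$.

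First I would work at the level of the modular. Fix $\lambda>0$. Since $|u(x)-u(y)|\leq|u(x)|+|u(y)|$ and $G$ is non-decreasing, property $(P_3)$ (subadditivity) gives
\begin{equation*}
G\!\left(\frac{|u(x)-u(y)|}{\lambda d(x,y)^{\alpha}}\right)\leq \frac{C}{2}\left[G\!\left(\frac{|u(x)|}{\lambda d(x,y)^{\alpha}}\right)+G\!\left(\frac{|u(y)|}{\lambda d(x,y)^{\alpha}}\right)\right].
\end{equation*}
Integrating and using symmetry of the integrand in $x,y$, this reduces the bound on $\Phi^{\alpha,G}_s(u/\lambda)$ to controlling $\iint_{0<d(x,y)<1} G\!\left(\tfrac{|u(x)|}{\lambda d(x,y)^{\alpha}}\right)\tfrac{d\mu(y)\,d\mu(x)}{d(x,y)^{s}}$. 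Since $0<d(x,y)<1$, the factor $1/d(x,y)^{\alpha}\geq 1$, so inequality \eqref{1GG3} with $\gamma=1/d(x,y)^{\alpha}$ yields
\begin{equation*}
G\!\left(\frac{|u(x)|}{\lambda d(x,y)^{\alpha}}\right)\leq \frac{1}{d(x,y)^{\alpha p^{0}}}\,G\!\left(\frac{|u(x)|}{\lambda}\right),
\end{equation*}
so the geometric singularity has been completely separated from $u$.

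The key step is then to show that the kernel integral
\begin{equation*}
I(x):=\int_{0<d(x,y)<1}\frac{d\mu(y)}{d(x,y)^{s+\alpha p^{0}}}
\end{equation*}
is uniformly bounded in $x\in X$. This is where the upper Ahlfors $\theta$-regularity and the hypothesis $\alpha p^{0}<\theta-s$ enter. I would split the annular region $\{0<d(x,y)<1\}$ dyadically as $A_k:=\{2^{-k-1}\leq d(x,y)<2^{-k}\}$ for $k\geq 0$, use \eqref{1eq2} to estimate $\mu(A_k)\leq b\,2^{-k\theta}$, and bound $I(x)$ by the geometric series
\begin{equation*}
I(x)\leq b\,2^{s+\alpha p^{0}}\sum_{k=0}^{\infty}2^{k(s+\alpha p^{0}-\theta)},
\end{equation*}
which converges precisely because $\alpha p^{0}<\theta-s$. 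Call this bound $M$. Fubini then gives $\Phi^{\alpha,G}_s(u/\lambda)\leq CM\,\Phi_G(u/\lambda)$.

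Finally, to turn this modular comparison into a norm comparison, I would set $\lambda=K\|u\|_{L^G(X,\mu)}$ with $K:=\max\{1,(CM)^{1/p_0}\}$. Applying \eqref{1GG1} with $\gamma=K\geq 1$ (in the form $G(t/K)\leq G(t)/K^{p_0}$) to $t=|u(x)|/\|u\|_{L^G(X,\mu)}$ and integrating yields $\Phi_G(u/\lambda)\leq 1/K^{p_0}$, hence $\Phi^{\alpha,G}_s(u/\lambda)\leq CM/K^{p_0}\leq 1$. By the definition of the seminorm in \eqref{1eq4}, $[u]_{W_s^{\alpha,G}(X,d,\mu)}\leq K\,\|u\|_{L^G(X,\mu)}$, which combined with the trivial inclusion gives the equivalence of norms. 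The only real obstacle is the kernel estimate for $I(x)$; once $\alpha p^{0}<\theta-s$ is used to sum the dyadic series, the rest is a bookkeeping exercise with the Orlicz inequalities of Lemma \ref{2lem6}.
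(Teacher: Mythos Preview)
Your proposal is correct and follows essentially the same route as the paper: split $|u(x)-u(y)|$ by subadditivity, use \eqref{1GG3} with $\gamma=d(x,y)^{-\alpha}$ to separate the singularity, then bound the kernel $\int_{0<d(x,y)<1}d(x,y)^{-s-\alpha p^{0}}\,d\mu(y)$ by a dyadic decomposition and the upper Ahlfors $\theta$-regularity, summing the resulting geometric series thanks to $\alpha p^{0}<\theta-s$. Your passage from the modular inequality to the seminorm bound via \eqref{1GG1} is in fact more explicit than the paper's, which simply asserts that the modular comparison implies $[u]_{W_s^{\alpha,G}}\le \beta\|u\|_{L^G}$.
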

\begin{proof}
	Let $u \in L^G(X, \mu)$ and $\lambda>0$, then by using the Fubini's Theorem and the inequality \eqref{1GG3}  we have 
	$$
	\begin{aligned}
	\Phi^{\alpha,G}_{s}\left(\frac{u}{\lambda}\right)& =\iint_{0<d(x, y)<1} G\left( \frac{|u(x)-u(y)|}{\lambda d(x, y)^{\alpha}}\right) \frac{d \mu(y) d \mu(x)}{d(x, y)^{s}} \\ & \leq \iint_{0<d(x, y)<1} G\left( \frac{2|u(x)|}{2\lambda d(x, y)^{\alpha}}+\frac{2|u(y)|}{2\lambda d(x, y)^{\alpha}}\right) \frac{d \mu(y) d \mu(x)}{d(x, y)^{s}}\\
	& \leq 2^{p^{0}}\int_X  \int_{B(x, 1) \backslash\{x\}}G\left(  \frac{|u(x)|}{\lambda d(x, y)^{\alpha}} \right) \frac{d \mu(y) d \mu(x)}{d(x, y)^{s}}  \\
	&+  2^{p^{0}} \int_X  \int_{B(y, 1) \backslash\{y\}}G\left(  \frac{|u(y)|}{\lambda d(x, y)^{\alpha}} \right) \frac{d \mu(x) d \mu(y)}{d(x, y)^{s}}  \\
	&\leq 2^{p^{0}+1} \int_XG\left( \frac{|u(x)|}{\lambda}\right) \int_{B(x, 1) \backslash\{x\}}\frac{1}{d(x, y)^{s+\alpha p^{0}}} d \mu(y)d\mu(x) \\
	& \leq  2^{s+\alpha p^{0} +p^{0}+1} \int_XG\left(\frac{|u(x)|}{\lambda}\right) \left(\sum_{k=0}^{\infty} \mu\left(B\left(x, 2^{-k}\right)\right) 2^{k(s+\alpha p^{0})}\right) d \mu(x) \\
	& \leq b 2^{s+\alpha p^{0}+p^{0}+1}\int_{X} G\left(\frac{|u(x)|}{\lambda}\right) d \mu(x) \sum_{k=0}^{\infty} 2^{k(\alpha p^{0}+s-\theta)}\\
	&=\frac{b 2^{s+\alpha p^{0}+p^{0}+1}}{1-2^{\alpha p^{0}+s-\theta}}\int_{X} G\left(\frac{|u(x)|}{\lambda}\right) d \mu(x),
	\end{aligned}
	$$
	and this implies that $$ [u]_{W^{\alpha,G}_{s}(X,d,\mu)}\leq \beta \|u\|_{L^{G}(X,d,\mu)}, \textit{ with }  \beta=\frac{b 2^{s+\alpha p^{0}+p^{0}+1}}{1-2^{\alpha p^{0}+s-\theta}},$$
	where we have used the decomposition $B(x, 1) \backslash\{x\}=$ $\bigcup_{k=0}^{\infty} B\left(x, 2^{-k}\right) \backslash B\left(x, 2^{-(k+1)}\right).$\newline
This completes the proof.
\end{proof}
In the sequel, let $(X, d, \mu)$ be a lower Ahlfors s-regular space and let $B$ be the ball with the center $z\in X$ and the radius $R>0$.\\
 The median value of a measurable function $u$ finite a.e. on $B$ is defined by 
$$
m_u(B)=\max \left\lbrace t \in \mathbb{R}: \mu(\{x \in B: u(x)<t\}) \leq \frac{\mu(B)}{2} \right\rbrace  .
$$
From \cite{GpSa2022}, we have the following result.
\begin{proposition}\label{2pro2}
	Let $u$ be a measurable function finite a.e. on $B$, then
	\begin{itemize}
		
		\item[$(a)$] if $c \in \mathbb{R}$, then $m_u(B)-c=m_{u-c}(B)$,
		\item[$(b)$] if $c>0$, then $c m_u(B)=m_{c u}(B)$.
	\end{itemize}
\end{proposition}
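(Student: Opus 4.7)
The plan is to prove both identities directly from the definition by recording how the sub-level sets of $u$ transform under real-line translations and positive dilations. For any measurable $v$ finite a.e.\ on $B$, let me write
\[
F_v(t) := \mu\bigl(\{x\in B : v(x) < t\}\bigr),
\]
so that by definition
\[
m_v(B) = \max\bigl\{t\in\mathbb{R} : F_v(t) \leq \mu(B)/2\bigr\}.
\]
The task then reduces to relating $F_{u-c}$ and $F_{cu}$ to $F_u$ and tracking what happens to the admissible set of reals.

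For part (a), I would observe that $\{x\in B : u(x)-c<t\} = \{x\in B : u(x) < t+c\}$, so $F_{u-c}(t)=F_u(t+c)$. Since $t\mapsto t+c$ is an order-preserving bijection of $\mathbb{R}$, it carries the set $\{t:F_{u-c}(t)\leq \mu(B)/2\}$ bijectively onto the set $\{s:F_u(s)\leq \mu(B)/2\}$, with the maxima differing precisely by $c$. This yields $m_{u-c}(B)=m_u(B)-c$. For part (b), using $c>0$ gives $\{x\in B : cu(x)<t\} = \{x\in B : u(x) < t/c\}$, so $F_{cu}(t)=F_u(t/c)$. The dilation $t\mapsto t/c$ is an order-preserving bijection of $\mathbb{R}$, and the corresponding admissible sets of reals are related by multiplication by $c$, yielding $m_{cu}(B)=c\, m_u(B)$.

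The only subtlety worth flagging is the verification that the supremum in the definition of the median is actually attained, so that the word \emph{max} is justified; this follows from the left-continuity of $F_u$, which in turn is a consequence of continuity of measure from below applied to the nested union $\{u<t\}=\bigcup_{n\geq 1}\{u<t-1/n\}$. This ensures the admissible set is closed on the right, so it contains its supremum. Beyond this elementary point there is no real obstacle: both claims are immediate corollaries of the two substitution identities for $F$.
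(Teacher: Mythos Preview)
Your argument is correct: the substitution identities $F_{u-c}(t)=F_u(t+c)$ and $F_{cu}(t)=F_u(t/c)$ (for $c>0$) are exactly what is needed, and the order-preserving bijections $t\mapsto t+c$ and $t\mapsto t/c$ transfer the maxima as claimed. Your remark on left-continuity of $F_u$ (hence attainment of the maximum) is also to the point.

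As for comparison with the paper: there is none to make, since the paper does not supply a proof of this proposition but simply cites it from G\'orka--S{\l}abuszewski \cite{GpSa2022}. Your self-contained verification from the definition is the natural argument and is presumably what the cited reference does as well.
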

\begin{lemma}
Let $G$ be an Orlicz function and let $u$ be a measurable function finite a.e. Then, for all $ c \in \mathbb{R}$, we have
	\begin{equation}\label{2eq9}
	\left|m_u(B)-c\right| \leq G^{-1}\left( 2 \fint_B G\left(|u(x)-c|\right) d \mu(x)\right) .
	\end{equation}
\end{lemma}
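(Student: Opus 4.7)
The plan is to first reduce to the case $c=0$ using Proposition \ref{2pro2}(a), which gives $m_{u-c}(B) = m_u(B)-c$. Setting $v = u-c$, the inequality to prove becomes
$$|m_v(B)| \leq G^{-1}\!\left(2\fint_B G(|v(x)|)\,d\mu(x)\right).$$
The strategy is a Chebyshev-type argument: exhibit a subset $E \subset B$ with $\mu(E) \geq \mu(B)/2$ on which $|v(x)| \geq |m_v(B)|$, and then use monotonicity of $G$.

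For the construction of $E$, I would split into two cases according to the sign of $m := m_v(B)$. If $m \geq 0$, take $E = \{x \in B : v(x) \geq m\}$. The bound $\mu(E) \geq \mu(B)/2$ follows directly from the definition of the median (since the maximum is attained, $\mu(\{v < m\}) \leq \mu(B)/2$), and on $E$ we have $|v(x)| \geq v(x) \geq m = |m|$. If $m < 0$, take instead $E = \{x \in B : v(x) \leq m\}$; on this set $|v(x)| \geq -v(x) \geq -m = |m|$. To see $\mu(E) \geq \mu(B)/2$, note that for any $t > m$, maximality gives $\mu(\{v < t\}) > \mu(B)/2$, hence $\mu(\{v \leq t\}) \geq \mu(\{v < t\}) > \mu(B)/2$; letting $t \downarrow m$ along a decreasing sequence and using continuity of the measure from above (applied to a decreasing sequence of sets of finite measure inside $B$) yields the claim.

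With $E$ in hand, monotonicity of $G$ gives
$$G(|m|)\cdot \frac{\mu(B)}{2} \leq G(|m|)\cdot \mu(E) \leq \int_E G(|v(x)|)\,d\mu(x) \leq \int_B G(|v(x)|)\,d\mu(x),$$
so $G(|m|) \leq 2 \fint_B G(|v(x)|)\,d\mu(x)$. Since $G$ is continuous, strictly increasing on $[0,\infty)$ (as a convex function with $G(0)=0$ that is super-linear at zero), the inverse $G^{-1}$ is well-defined and monotone, and applying it finishes the proof.

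The main obstacle is the small technical point of justifying that both $\mu(\{v \geq m\}) \geq \mu(B)/2$ and $\mu(\{v \leq m\}) \geq \mu(B)/2$ hold simultaneously; one of these comes directly from the "maximum attained" clause in the definition of the median, while the other requires the limiting argument outlined above. Once this is in place, the rest of the argument is a standard Markov-type inequality combined with the reduction from Proposition \ref{2pro2}(a).
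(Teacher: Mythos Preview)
Your proof is correct and is a slightly more direct variant of the paper's argument. Both proofs reduce to $c=0$ via Proposition~\ref{2pro2}(a) and rest on a Chebyshev-type estimate, but the execution differs. The paper proceeds in two steps: it first establishes the auxiliary inequality $|m_u(B)| \leq m_{|u|}(B)$ by a sign analysis on $m_u(B)$, and then bounds $m_{|u-c|}(B)$ by applying Chebyshev's inequality with a parameter $\eta>2$ and passing to the limit $\eta\to 2$. Your approach merges these: you directly exhibit a set $E\subset B$ of measure at least $\mu(B)/2$ on which $|v|\geq |m_v(B)|$, and integrate $G(|v|)\geq G(|m_v(B)|)$ over $E$ to obtain the constant $2$ in one stroke. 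This avoids both the detour through $m_{|u|}$ and the limiting argument. The paper's route, on the other hand, isolates the reusable inequality $|m_u(B)|\leq m_{|u|}(B)$. Your justification that $\mu(\{v\leq m\})\geq \mu(B)/2$ via $t\downarrow m$ is precisely the same observation the paper records and uses.
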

\begin{proof}
Let us first prove the following inequality
	\begin{equation}\label{2eq10}
	\left|m_u(B)\right| \leq m_{|u|}(B) .
	\end{equation}
	If $m_u(B)>0$, then we have
	$$
	\left\{x \in B:|u(x)|<m_u(B)\right\} \subseteq\left\{x \in B: u(x)<m_u(B)\right\},
	$$
	thus the inequality \eqref{2eq10} follows.\\
	 If $m_u(B) \leq 0$, then we have 
	\begin{equation}\label{2equ10d1}
	\left\{x \in B:|u(x)|<\left|m_u(B)\right|\right\} \subseteq\left\{x \in B: u(x)>m_u(B)\right\},
	\end{equation}
	and by the definition of the median value we have
	\begin{equation}\label{2eq10d}
	\mu\left(\left\{x \in B: u(x) \leq m_u(B)\right\}\right) \geq \frac{\mu(B)}{2}.
	\end{equation}
	Hence from the inclusion \eqref{2equ10d1} and the inequality \eqref{2eq10d} we get \eqref{2eq10}.\newline
 Now, let us show the following inequality
	\begin{equation}\label{2eq11}
	m_{|u-c|}(B) \leq G^{-1}\left(2 \fint_B G\left(|u(x)-c|\right)  d \mu(x)\right) . 
	\end{equation}
	Let $\eta>2$ and $\delta=\fint_{B}G\left(|u(x)-c|\right) d \mu(x)$. Then by general Chebyshev's inequality, we have
	$$
	\begin{aligned}
	\mu\left( \left\lbrace x \in B:|u(x)-c| \geq G^{-1}\left( \eta \delta \right)  \right\rbrace \right)  &\leq \frac{1}{\eta \delta} \int_B G\left( |u(x)-c|\right)  d \mu(x)\\
	&=\frac{1}{\eta} \mu(B)<\frac{1}{2} \mu(B) .
	\end{aligned}
	$$
	Thus,
	$$
	m_{|u-c|}(B) \leq G^{-1}\left(\eta \fint_B G\left( |u(x)-c|\right)  d \mu(x)\right) .
	$$
	Then by passing to the limit $\eta \rightarrow 2$ we get \eqref{2eq11}.\newline Hence, by Proposition \ref{2pro2} and the inequalities \eqref{2eq10} and \eqref{2eq11} we obtain the result.
\end{proof}
\begin{corollary}
Let $G$ be an Orlicz function and let $u$ be a measurable function finite a.e. Then for all $t$ such that $|u(t)|<\infty$ we get
	\begin{equation}\label{2eq12}
	G\left( \left|m_u(B(z, r))-u(t)\right|\right)  \leq 2 b r^{-s} \int_{B(z, r)} G\left(|u(t)-u(y)|\right)  d \mu(y),
	\end{equation}
	where $z \in X$ and $r \leq 1$.
\end{corollary}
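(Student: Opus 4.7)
The plan is to apply the preceding lemma directly with the constant $c$ chosen as $c=u(t)$, then invoke monotonicity of $G$ and the lower Ahlfors regularity of the space. Since $|u(t)|<\infty$ by hypothesis, the substitution $c=u(t)$ in \eqref{2eq9} is legitimate, yielding
\begin{equation*}
\left|m_u(B(z,r)) - u(t)\right| \leq G^{-1}\!\left(2\fint_{B(z,r)} G(|u(y)-u(t)|)\,d\mu(y)\right).
\end{equation*}

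Next I would apply $G$ to both sides. This is valid because $G$ is increasing (property $(H_1)$), and it cancels the $G^{-1}$ on the right, producing
\begin{equation*}
G\!\left(|m_u(B(z,r)) - u(t)|\right) \leq 2\fint_{B(z,r)} G(|u(y)-u(t)|)\,d\mu(y).
\end{equation*}

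Finally, since $(X,d,\mu)$ is lower Ahlfors $s$-regular and $r\in(0,1]$, the defining inequality \eqref{1eq1} gives $\mu(B(z,r))\geq b^{-1}r^{s}$, hence $\mu(B(z,r))^{-1}\leq b\,r^{-s}$. Substituting this bound into the average on the right-hand side yields precisely \eqref{2eq12}. There is no real obstacle here; the result is essentially a direct specialization of \eqref{2eq9} together with the measure lower bound, and the only point to note is that $u(t)$ being finite is exactly what allows us to take it as the constant $c$.
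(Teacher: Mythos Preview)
Your argument is correct and matches the paper's intent: the corollary is stated without proof in the paper, as it is an immediate specialization of \eqref{2eq9} with $c=u(t)$ combined with the lower Ahlfors $s$-regularity bound \eqref{1eq1}, which is precisely what you carry out.
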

Now let us prove the following lemma.
\begin{lemma}\label{2lem8}
Let $G$ be an Orlicz function and let $u \in \dot{W}_s^{\alpha, G}(X,d,\mu)$ for some $\alpha >0$. Then, there exists a null set $H$ such that $\lim _{r \rightarrow 0} m_u(B(x, r))=u(x)$ for all $x \in X \backslash H$.
\end{lemma}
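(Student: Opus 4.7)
The plan is to derive the pointwise convergence from the corollary's bound \eqref{2eq12}, combined with a Fubini consequence of $u\in\dot W_s^{\alpha,G}(X,d,\mu)$ and the scaling estimate \eqref{1GG4}. First, since $u\in\dot W_s^{\alpha,G}$, I can choose $\lambda>0$ with $\Phi^{\alpha,G}_{s}(u/\lambda)\le 1$. Fubini's theorem then yields a $\mu$-null set $H_1$ outside of which
\[ I(x):=\int_{B(x,1)\setminus\{x\}} G\!\left(\frac{|u(x)-u(y)|}{\lambda\, d(x,y)^{\alpha}}\right)\frac{d\mu(y)}{d(x,y)^{s}}<\infty. \]
Adjoining the null set where $u$ fails to be finite, one obtains the exceptional set $H$.

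Next, fix $x\in X\setminus H$ and $0<r<1$. I would apply \eqref{2eq12} to the function $u/\lambda$ with $z=t=x$; by Proposition \ref{2pro2}$(b)$ one has $m_{u/\lambda}(B(x,r))=m_u(B(x,r))/\lambda$, so
\[ G\!\left(\frac{|m_u(B(x,r))-u(x)|}{\lambda}\right)\le 2b\, r^{-s}\int_{B(x,r)} G\!\left(\frac{|u(x)-u(y)|}{\lambda}\right) d\mu(y). \]
The key step is to control this right-hand side by $I(x)$ up to a vanishing prefactor. Since $\gamma:=d(x,y)^{\alpha}\in(0,1]$ for $y\in B(x,r)\setminus\{x\}$, inequality \eqref{1GG4} gives
\[ G\!\left(\frac{|u(x)-u(y)|}{\lambda}\right)\le d(x,y)^{\alpha p_0}\, G\!\left(\frac{|u(x)-u(y)|}{\lambda\, d(x,y)^{\alpha}}\right), \]
and the pointwise inequality $r^{-s} d(x,y)^{\alpha p_0}\le r^{\alpha p_0} d(x,y)^{-s}$ (valid because $d(x,y)\le r$) leads to
\[ 2b\, r^{-s}\int_{B(x,r)} G\!\left(\frac{|u(x)-u(y)|}{\lambda}\right) d\mu(y)\le 2b\, r^{\alpha p_0}\, I(x). \]

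Since $I(x)<\infty$ at every $x\notin H$ and $\alpha p_0>0$, this bound tends to $0$ as $r\to 0^{+}$, so $G(|m_u(B(x,r))-u(x)|/\lambda)\to 0$. Because $G$ is continuous, strictly increasing on $\R_+$, and $G(0)=0$, one concludes $m_u(B(x,r))\to u(x)$ for every $x\in X\setminus H$. The main obstacle is precisely this third step: the integrand on the right-hand side of \eqref{2eq12} carries no scaling by $d(x,y)^{\alpha}$, whereas the finite quantity delivered by $u\in\dot W_s^{\alpha,G}$ involves exactly such scaling; inequality \eqref{1GG4} applied with $\gamma=d(x,y)^{\alpha}$ is the $\Delta_2$-type tool that performs this conversion and, since $p_0>1$, simultaneously supplies the decaying factor $r^{\alpha p_0}$ that forces the upper bound to vanish.
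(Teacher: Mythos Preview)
Your proof is correct and follows essentially the same route as the paper: define the exceptional set via Fubini, apply the median bound \eqref{2eq12} at $z=t=x$, and then use \eqref{1GG4} with $\gamma=d(x,y)^{\alpha}$ to convert the unscaled integrand into the scaled one while producing the decaying factor $r^{\alpha p_0}$. The only cosmetic difference is that you normalize by $\lambda$ and work with $u/\lambda$ (invoking Proposition~\ref{2pro2}(b) for the median), whereas the paper applies \eqref{2eq12} and \eqref{1GG4} directly to $u$, implicitly relying on the $\Delta_2$ condition to ensure $\Phi^{\alpha,G}_s(u)<\infty$ from $[u]_{W_s^{\alpha,G}}<\infty$.
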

\begin{proof}
	Let us consider the following set
	$$
	H=\left\{x \in X: \int_{B(x, 1)} G\left( \frac{|u(x)-u(y)|}{d(x, y)^{\alpha}}\right) \frac{d \mu(y)}{d(x, y)^{s}}=+\infty \text { or }|u(x)|=+\infty\right\}.
	$$
 By the Fubini's Theorem, we have $\displaystyle \int_{B(x, 1)} G\left(\frac{|u(x)-u(y)|}{d(x, y)^{\alpha}}\right)  \frac{d\mu(y)}{d(x, y)^{s}} \in L^1(X, \mu)$, then $\mu(H)=0$. Hence, for all $x \in X \backslash H$ and $r \leq 1$, by \eqref{2eq12} and $\eqref{1GG4}$ we have
\begin{equation*}
	\begin{aligned}
	G\left( \left|u(x)-m_u(B(x, r))\right|\right)  & \leq 2 b r^{-s} \int_{B(x, r)}G\left(|u(x)-u(y)|\right)  d \mu(y) \\
	& \leq 2 b r^{-s} \int_{B(x, r)} G\left( \frac{|u(x)-u(y)|}{d(x, y)^{\alpha}}\right) d(x, y)^{\alpha p_{0}+ s} \frac{d \mu(y)}{d(x, y)^{s}} \\
	& \leq 2 b r^{\alpha p_{0} } \int_{B(x, 1)} G\left(\frac{|u(x)-u(y)|}{d(x, y)^{\alpha}}\right) \frac{d \mu(y)}{d(x, y)^{s}}.
	\end{aligned} 
\end{equation*}
 Since $\displaystyle 2 b r^{\alpha p_{0} } \int_{B(x, 1)} G\left(\frac{|u(x)-u(y)|}{d(x, y)^{\alpha}}\right) \frac{d \mu(y)}{d(x, y)^{s}} $ tends to $0$  as $r$ tends to $0$, the proof follows.
\end{proof}
\begin{proposition}\label{prop2.8} Let $G$ be an Orlicz function and let $\alpha >0$. Then for all $z \in X$, $0<r_1 \leq r_2<1-r_1$ and $u \in \dot{W}_s^{\alpha, G}(X,d,\mu)$ we have
	\begin{equation}\label{2eq13}
	\begin{aligned}
	&G\left(\left|m_u\left(B\left(z, r_1\right)\right)-m_u\left(B\left(z, r_2\right)\right)\right|\right)\\&  \leq 4 b^2 2^{s+\alpha p_{0}}[u]_{W^{\alpha,G}_{s}(X,d,\mu)}^{p_{0}}\left(r_1^{\alpha p_{0}-s}+\frac{r_2^{\alpha p_{0}}}{r_1^s}\right)
	\end{aligned}
	\end{equation}
	if $[u]_{W^{\alpha,G}_{s}(X,d,\mu)}\leq 1$, and  
	\begin{equation}\label{2eq14}
	\begin{aligned}
	&G\left(\left|m_u\left(B\left(z, r_1\right)\right)-m_u\left(B\left(z, r_2\right)\right)\right|\right)\\&  \leq 4 b^2 2^{s+\alpha p_{0}}[u]_{W^{\alpha,G}_{s}(X,d,\mu)}^{p^{0}}\left(r_1^{\alpha p_{0}-s}+\frac{r_2^{\alpha p_{0}}}{r_1^s}\right)
	\end{aligned}
	\end{equation}
	if $[u]_{W^{\alpha,G}_{s}(X,d,\mu)}\geq 1$.
\end{proposition}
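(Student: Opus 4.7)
The plan is to first establish the desired bound for the normalized function $v := u/\lambda$, where $\lambda := [u]_{W_s^{\alpha, G}(X,d,\mu)}$, and then scale back using Lemma \ref{2lem6}. By definition of the Luxemburg seminorm, $\Phi_s^{\alpha, G}(v) \leq 1$, and Proposition \ref{2pro2}(b) gives $m_v(B) = m_u(B)/\lambda$. Writing $B_i := B(z, r_i)$, it suffices to prove
\[ G(|m_v(B_1) - m_v(B_2)|) \leq 4 b^2 \cdot 2^{s + \alpha p_0}\bigl(r_1^{\alpha p_0 - s} + r_2^{\alpha p_0}/r_1^s\bigr), \]
since the $\lambda$-dependence is then recovered from $|m_u(B_1) - m_u(B_2)| = \lambda |m_v(B_1) - m_v(B_2)|$ by applying \eqref{1GG4} (if $\lambda \leq 1$, yielding $\lambda^{p_0}$) or \eqref{1GG3} (if $\lambda \geq 1$, yielding $\lambda^{p^0}$).

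For the core estimate, I apply inequality \eqref{2eq9} with shift $c = m_v(B_2)$ on the ball $B_1$. Composing with $G$ (using $G \circ G^{-1} = \operatorname{id}$) gives
\[ G(|m_v(B_1) - m_v(B_2)|) \leq 2 \fint_{B_1} G(|v(x) - m_v(B_2)|) \, d\mu(x). \]
For each $x \in B_1 \subseteq B_2$ with $|v(x)| < \infty$, inequality \eqref{2eq12} (valid since $r_2 < 1$) dominates the integrand by $2 b r_2^{-s} \int_{B_2} G(|v(x) - v(y)|) \, d\mu(y)$. Combined with the lower Ahlfors regularity $\mu(B_1)^{-1} \leq b/r_1^s$, this gives
\[ G(|m_v(B_1) - m_v(B_2)|) \leq \frac{4 b^2}{r_1^s r_2^s} \iint_{B_1 \times B_2} G(|v(x) - v(y)|) \, d\mu(y)\, d\mu(x). \]

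To recognize the right-hand side as a piece of the modular $\Phi_s^{\alpha, G}(v)$, I use the hypothesis $r_2 < 1 - r_1$: every $(x, y) \in B_1 \times B_2$ satisfies $d(x, y) \leq r_1 + r_2 < 1$, so \eqref{1GG4} (written as $G(\eta t) \leq \eta^{p_0} G(t)$ for $\eta \in (0, 1]$, with $\eta = d(x, y)^\alpha$) gives
\[ G(|v(x) - v(y)|) \leq \frac{d(x, y)^{\alpha p_0 + s}}{d(x, y)^s}\, G\bigl(|v(x) - v(y)|/d(x, y)^\alpha\bigr). \]
Bounding $d(x, y)^{\alpha p_0 + s} \leq (2 r_2)^{\alpha p_0 + s}$ (using $r_1 \leq r_2$) and invoking $\Phi_s^{\alpha, G}(v) \leq 1$ produces $G(|m_v(B_1) - m_v(B_2)|) \leq 4 b^2 \cdot 2^{s + \alpha p_0} \cdot r_2^{\alpha p_0}/r_1^s$, which is dominated by the stated two-term sum since $r_1^{\alpha p_0 - s} \geq 0$.

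The main delicate point is ensuring $d(x, y) < 1$ throughout the integration domain, as this is exactly where the hypothesis $r_2 < 1 - r_1$ enters and is indispensable for invoking \eqref{1GG4} to convert the bare $G(|v(x) - v(y)|)$ into the seminorm integrand; without it one cannot bring the modular of $v$ into play. A minor bookkeeping check is that the application of \eqref{2eq12} requires $r_i < 1$, which is automatic from the same hypothesis, and that the shift $c = m_v(B_2)$ in \eqref{2eq9} is a real number, which is fine since $v$ is finite a.e.
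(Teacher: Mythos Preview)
Your proof is correct and follows essentially the same route as the paper's: apply \eqref{2eq9} with $c=m_u(B_2)$ and then \eqref{2eq12} to reach the double integral over $B_1\times B_2$, insert the factor $d(x,y)^{\alpha p_0+s}/d(x,y)^s$ via \eqref{1GG4}, and bound by the seminorm using \eqref{1GG4} or \eqref{1GG3} according to whether $[u]\le 1$ or $[u]\ge 1$. The only cosmetic differences are that you normalize $u$ at the outset (the paper brings $[u]$ in at the end, and explicitly disposes of the case $[u]=0$, which your division by $\lambda$ tacitly excludes) and that you bound $d(x,y)\le 2r_2$ directly to obtain the single term $r_2^{\alpha p_0}/r_1^s$, whereas the paper keeps $d(x,y)\le r_1+r_2$ and splits $(r_1+r_2)^{\alpha p_0}$ to produce both summands in the stated estimate.
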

\begin{proof} 
	By using the inequalities \eqref{2eq9}, \eqref{2eq12} and \eqref{1GG4}, we have
   $$
    \begin{aligned}
&G\left( \left|m_u\left(B\left(z, r_1\right)\right)-m_u\left(B\left(z, r_2\right)\right)\right|\right)\\ &\leq 2 b r_1^{-s} \int_{B\left(z, r_1\right)}G\left( \left|u(x)-m_u\left(B\left(z, r_2\right)\right)\right|\right)  d \mu(x)\\
	&\leq 4 b^2 r_1^{-s} r_2^{-s} \int_{B\left(z, r_1\right)} \int_{B\left(z, r_2\right)}G\left(|u(x)-u(y)|\right)  d \mu(y) d \mu(x) \\
	& \leq 4 b^2 r_1^{-s} r_2^{-s} \int_{B\left(z, r_1\right)} \int_{B\left(z, r_2\right)}G\left(\frac{|u(x)-u(y)|}{d(x, y)^{\alpha}} \right)d(x, y)^{\alpha p_{0}}d(x, y)^{s} \frac{d \mu(y) d \mu(x)}{d(x, y)^{s}}\\
	& \leq 4 b^2 r_1^{-s} r_2^{-s} \int_{B\left(z, r_1\right)} \int_{B\left(z, r_2\right)} G\left( \frac{|u(x)-u(y)|}{d(x, y)^{\alpha}}\right) \left(r_1+r_2\right)^{s+\alpha p_{0}} \frac{d \mu(y) d \mu(x)}{d(x, y)^{s}}
	\end{aligned}
	$$
	\begin{equation}\label{equ2.15}
	\begin{aligned}
	& \leq 4 b^2 r_1^{-s} r_2^{-s}\left(r_1+r_2\right)^{s+\alpha p_{0}}\iint_{0<d(x, y)<1} G\left( \frac{|u(x)-u(y)|}{ d(x, y)^{\alpha}}\right) \frac{d \mu(y) d \mu(x)}{d(x, y)^{s}} \\
	& =4 b^2 \frac{\left(r_1+r_2\right)^s}{r_2^s} \frac{\left(r_1+r_2\right)^{\alpha p_{0}}}{r_1^s}\iint_{0<d(x, y)<1} G\left(\frac{ |u(x)-u(y)|}{d(x, y)^{\alpha}}\right) \frac{d \mu(y) d \mu(x)}{d(x, y)^{s}} \\
	& \leq 4 b^2 2^{s+\alpha p_{0}}\\ &\times\left(r_1^{\alpha p_{0} -s}+\frac{r_2^{\alpha p_{0} }}{r_1^s}\right)\iint_{0<d(x, y)<1} G\left( \frac{|u(x)-u(y)|}{d(x, y)^{\alpha}}\right) \frac{d \mu(y) d \mu(x)}{d(x, y)^{s}}.
	\end{aligned}
	\end{equation}
If $[u]_{W^{\alpha,G}_{s}(X,d,\mu)}=0$, the inequality  \eqref{2eq13} is trivial.\newline
We suppose that $[u]_{W^{\alpha,G}_{s}(X,d,\mu)}\in (0,1]$, then by \eqref{equ2.15} and \eqref{1GG4}  we have
$$
\begin{aligned}
	&G\left( \left|m_u\left(B\left(z, r_1\right)\right)-m_u\left(B\left(z, r_2\right)\right)\right|\right)\\ &\leq  4 b^2 2^{s+\alpha p_{0}}\left(r_1^{\alpha p_{0} -s}+\frac{r_2^{\alpha p_{0} }}{r_1^s}\right)\\& \times [u]_{W^{\alpha,G}_{s}(X,d,\mu)}^{p_{0}}\iint_{0<d(x, y)<1} G\left( \frac{|u(x)-u(y)|}{[u]_{W^{\alpha,G}_{s}(X,d,\mu)}d(x, y)^{\alpha}}\right) \frac{d \mu(y) d \mu(x)}{d(x, y)^{s}}\\
	&\leq 4 b^2 2^{s+\alpha p_{0}}[u]_{W^{\alpha,G}_{s}(X,d,\mu)}^{p_{0}}\left(r_1^{\alpha p_{0} -s}+\frac{r_2^{\alpha p_{0} }}{r_1^s}\right).
\end{aligned}
$$
Now we suppose that $[u]_{W^{\alpha,G}_{s}(X,d,\mu)}\geq 1$, then by \eqref{equ2.15} and \eqref{1GG3} we have 
	$$ 
	\begin{aligned}
	&G\left( \left|m_u\left(B\left(z, r_1\right)\right)-m_u\left(B\left(z, r_2\right)\right)\right|\right)\\
	&\leq  4 b^2 2^{s+\alpha p_{0}}\left(r_1^{\alpha p_{0} -s}+\frac{r_2^{\alpha p_{0} }}{r_1^s}\right) \\& \times\iint_{0<d(x, y)<1} G\left( [u]_{W^{\alpha,G}_{s}(X,d,\mu)} \frac{|u(x)-u(y)|}{[u]_{W^{\alpha,G}_{s}(X,d,\mu)}d(x, y)^{\alpha}}\right) \frac{d \mu(y) d \mu(x)}{d(x, y)^{s}}\\ 
	&\leq  4 b^2 2^{s+\alpha p_{0}}\left(r_1^{\alpha p_{0} -s}+\frac{r_2^{\alpha p_{0} }}{r_1^s}\right)\\&\times [u]_{W^{\alpha,G}_{s}(X,d,\mu)}^{p^{0}}\iint_{0<d(x, y)<1} G\left( \frac{|u(x)-u(y)|}{[u]_{W^{\alpha,G}_{s}(X,d,\mu)}d(x, y)^{\alpha}}\right) \frac{d \mu(y) d \mu(x)}{d(x, y)^{s}}\\
	&\leq  4 b^2 2^{s+\alpha p_{0}}[u]_{W^{\alpha,G}_{s}(X,d,\mu)}^{p^{0}}\left(r_1^{\alpha p_{0} -s}+\frac{r_2^{\alpha p_{0} }}{r_1^s}\right).
	\end{aligned}
	$$
This concludes the proof.
\end{proof}
\begin{proposition}
Let $G$ be an Orlicz function such that $G^{-1}(xy)\leq x^{\frac{1}{p_{0}}}G^{-1}(y)$ if $x\leq 1$, and $G^{-1}(xy)\leq x^{\frac{1}{p^{0}}}G^{-1}(y)$ if $x\geq 1$ and let $\alpha >0$. Then for all $z \in X$, $R\in \left(0,\frac{2}{3}\right)$,  $i, j \in \mathbb{N}$, $0 \leq i<j$ and $u \in \dot{W}_s^{\alpha, G}(X,d,\mu)$ we have
	\begin{equation}\label{2eq15} 
	\begin{aligned}
	&\left|m_u\left(B\left(z, \frac{R}{2^j}\right)\right)-m_u\left(B\left(z, \frac{R}{2^i}\right)\right)\right|\\& \leq A [u]_{W^{\alpha, G}_{s}(X,d,\mu)}\sum_{l=i}^{j-1} G^{-1}\left( \frac{R^{\alpha p_{0}-s}}{2^{l(\alpha p_{0}-s)}}\right),
	\end{aligned}
	\end{equation}
	where $A=\displaystyle \max\left\lbrace \left(4^{s+1} b^2\right)^{\frac{1}{ p_{0}}}\left(1+2^{\alpha p_{0}}\right)^{\frac{1}{ p_{0}}},\left(4^{s+1} b^2\right)^{\frac{1}{p^{0}}}\left(1+2^{\alpha p_{0}}\right)^{\frac{1}{p^{0}}}\right\rbrace $.
\end{proposition}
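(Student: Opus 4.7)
The plan is to telescope over dyadic scales, apply Proposition \ref{prop2.8} to each successive pair $B(z,R/2^{l+1}) \subset B(z,R/2^l)$, and then invert $G$ using the stated hypothesis on $G^{-1}$ to extract a factor $[u]_{W^{\alpha, G}_{s}(X,d,\mu)}$ and a factor $G^{-1}(R^{\alpha p_0-s}/2^{l(\alpha p_0-s)})$ from each term.

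Concretely, I would first decompose
\[
m_u(B(z, R/2^j)) - m_u(B(z, R/2^i)) = \sum_{l=i}^{j-1}\left[m_u(B(z, R/2^{l+1})) - m_u(B(z, R/2^l))\right],
\]
apply the triangle inequality, and for each $l$ invoke Proposition \ref{prop2.8} with $r_1 = R/2^{l+1}$ and $r_2 = R/2^l$. The admissibility $r_1 \leq r_2 < 1 - r_1$ follows from $r_1 + r_2 = 3R/2^{l+1} \leq 3R/2 < 1$ since $R < 2/3$. A direct computation gives $r_1^{\alpha p_0 - s} + r_2^{\alpha p_0}/r_1^s = 2^s(1 + 2^{-\alpha p_0})\, R^{\alpha p_0 - s}/2^{l(\alpha p_0 - s)}$, and after incorporating the prefactor $4 b^2 2^{s+\alpha p_0}$ from Proposition \ref{prop2.8} the right-hand side takes the form $C \cdot [u]_{W^{\alpha, G}_{s}(X,d,\mu)}^{q} \cdot T_l$, with $C = 4^{s+1} b^2 (1 + 2^{\alpha p_0})$, $T_l = R^{\alpha p_0-s}/2^{l(\alpha p_0-s)}$, and $q \in \{p_0, p^0\}$ according as $[u]_{W^{\alpha, G}_{s}(X,d,\mu)}$ is at most or at least $1$.

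Applying $G^{-1}$ and writing the argument as $[u]_{W^{\alpha, G}_{s}(X,d,\mu)}^q \cdot (C T_l)$, I would then invoke the stated hypothesis twice. The first application uses $x = [u]_{W^{\alpha, G}_{s}(X,d,\mu)}^q$: in each of the two cases this quantity lies on exactly the side of $1$ for which the corresponding exponent is $1/q$, so the hypothesis produces the clean factor $([u]^q)^{1/q} = [u]_{W^{\alpha, G}_{s}(X,d,\mu)}$. The second application is to the factor $C$ in $G^{-1}(C \cdot T_l)$, which yields a multiplicative constant equal to either $C^{1/p_0}$ or $C^{1/p^0}$ depending on the sign of $C - 1$; in either case this constant is dominated by $A$. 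Summing over $l = i, \dots, j-1$ then gives the claimed inequality.

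The main obstacle is the careful tracking of the case distinctions: neither $[u]_{W^{\alpha, G}_{s}(X,d,\mu)}$ nor $C$ is a priori on a known side of $1$, so four subcases appear from the two applications of the hypothesis on $G^{-1}$, and the definition of $A$ as the maximum of $(4^{s+1} b^2)^{1/p_0}(1 + 2^{\alpha p_0})^{1/p_0}$ and $(4^{s+1} b^2)^{1/p^0}(1 + 2^{\alpha p_0})^{1/p^0}$ is precisely what absorbs this case distinction uniformly.
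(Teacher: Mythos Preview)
Your proposal is correct and follows essentially the same route as the paper: telescope over dyadic scales, apply Proposition~\ref{prop2.8} with $r_1=R/2^{l+1}$, $r_2=R/2^l$, simplify the right-hand side to $4^{s+1}b^2(1+2^{\alpha p_0})[u]^{q}\,T_l$ with $q\in\{p_0,p^0\}$, and then invert $G$ using the stated hypothesis on $G^{-1}$, absorbing the constant into $A$. Your explicit verification of the admissibility condition $r_1+r_2<1$ (forced by $R<2/3$) and of the two nested applications of the $G^{-1}$ hypothesis are points the paper leaves implicit, but the argument is otherwise identical.
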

\begin{proof}
	Let us take $r_1=\displaystyle \frac{R}{2^{l+1}} $ and $r_2=\displaystyle \frac{R}{2^l}$, where $l\in \mathbb{N}$. Then, by the proposition \ref{prop2.8}, we get
	$$
	\begin{aligned}
	&G\left( \left|m_u\left(B\left(z, \frac{R}{2^{l+1}}\right)\right)-m_u\left(B\left(z, \frac{R}{2^l}\right)\right)\right|\right) \\ & \leq 4 b^2 2^{s+\alpha p_{0}}[u]_{W^{\alpha, G}_{s}(X,d,\mu)}^{p_{0}}\left(\frac{1}{2^{\alpha p_{0}-s}} \frac{R^{\alpha p_{0}-s}}{2^{l(\alpha p_{0}-s)}}+2^s \frac{R^{\alpha p_{0}-s}}{2^{l(\alpha p_{0}-s)}}\right) \\
	& =4^{s+1} b^2 (1+2^{\alpha p_{0}})[u]_{W^{\alpha, G}_{s}(X,d,\mu)}^{p_{0}} \frac{R^{\alpha p_{0}-s}}{2^{l(\alpha p_{0}-s)}} 
	\end{aligned}
	$$
	if $[u]_{W^{\alpha, G}_{s}(X,d,\mu)} \leq 1$, and 
	$$
	\begin{aligned}
	&G\left( \left|m_u\left(B\left(z, \frac{R}{2^{l+1}}\right)\right)-m_u\left(B\left(z, \frac{R}{2^l}\right)\right)\right|\right)\\  & \leq  4 b^2 2^{s+\alpha p_{0}}[u]_{W^{\alpha, G}_{s}(X,d,\mu)}^{p^{0}}\left(\frac{1}{2^{\alpha p_{0}-s}} \frac{R^{\alpha p_{0}-s}}{2^{l(\alpha p_{0}-s)}}+2^s \frac{R^{\alpha p_{0}-s}}{2^{l(\alpha p_{0}-s)}}\right) \\
	& =4^{s+1} b^2 (1+2^{\alpha p_{0}})[u]_{W^{\alpha, G}_{s}(X,d,\mu)}^{p^{0}} \frac{R^{\alpha p_{0}-s}}{2^{l(\alpha p_{0}-s)}} 
	\end{aligned}
	$$
	if $[u]_{W^{\alpha, G}_{s}(X,d,\mu)} \geq 1$.\newline
Then in both cases, we have $$ \left|m_u\left(B\left(z, \frac{R}{2^{l+1}}\right)\right)-m_u\left(B\left(z, \frac{R}{2^l}\right)\right)\right|\leq   A [u]_{W^{\alpha, G}_{s}(X,d,\mu)}G^{-1}                   \left(  \frac{R^{\alpha p_{0}-s}}{2^{l(\alpha p_{0}-s)}} \right) $$
	where $A=\max\left\lbrace \left(4^{s+1} b^2\right)^{1 / p_{0}}\left(1+2^{\alpha p_{0}}\right)^{1 / p_{0}},\left(4^{s+1} b^2\right)^{1 / p^{0}}\left(1+2^{\alpha p_{0}}\right)^{1 / p^{0}}\right\rbrace $.\newline 
Hence, by the triangle inequality we have
$$
\begin{aligned}
	&\left|m_u\left(B\left(z, \frac{R}{2^j}\right)\right)-m_u\left(B\left(z, \frac{R}{2^i}\right)\right)\right|\\ & \leq \sum_{l=i}^{j-1}\left|m_u\left(B\left(z, \frac{R}{2^{l+1}}\right)\right)-m_u\left(B\left(z, \frac{R}{2^l}\right)\right)\right| \\
	& \leq A [u]_{W^{\alpha, G}_{s}(X,d,\mu)} \sum_{l=i}^{j-1} G^{-1}\left( \frac{R^{\alpha p_{0}-s}}{2^{l(\alpha p_{0}-s)}}\right),
\end{aligned}
$$ 
which completes the proof.
\end{proof}
\section{Continuous embeddings}\label{sec3}
This section is aimed at proving some results concerning  continuous embeddings between the fractional order Orlicz-Sobolev space defined on metric-measure spaces $W_s^{\alpha, G}(X, d, \mu)$, and the fractional Haj{\l}asz-Orlicz-Sobolev space $M^{\beta, G}(X,d,\mu)$ where $G$ is an Orlicz function.
\begin{theorem}\label{3theo1}
Let $G$ be an Orlicz function such that $G^{-1}(xy)\leq x^{\frac{1}{p_{0}}}G^{-1}(y)$ if $x\leq 1$ and  $G^{-1}(xy)\leq x^{\frac{1}{p^{0}}}G^{-1}(y)$ if $x\geq 1$, and let $s, \alpha>0$ such that $\alpha p_{0}>s$. Then, there exists a constant $C >0$ such that any $u \in \dot{W}_s^{\alpha, G}(X,d,\mu)$ equals a.e. to a continuous function $\widetilde{u}$ and we have the following inequality
	\begin{equation}\label{3eq1}
	\displaystyle\sup _{ 0<d(x, y)<\frac{1}{3}} \frac{\left|\widetilde{u}(x)-\widetilde{u}(y)\right|}{d(x, y)^{\alpha-\frac{s}{p_{0}}}} \leq C [u]_{W_s^{\alpha, G}(X,d,\mu)} .
	\end{equation}
\end{theorem}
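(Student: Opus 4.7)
The plan is to realise the continuous representative $\widetilde{u}$ as the pointwise limit of medians and to turn the telescoping bound \eqref{2eq15} into a H\"older modulus of continuity via the hypothesis on $G^{-1}$. Since both sides of \eqref{3eq1} are $1$-homogeneous in $u$ (the seminorm is $1$-homogeneous, and $m_{cu}(B)=c\,m_u(B)$ for $c>0$ by Proposition \ref{2pro2}(b)), I would first normalise by assuming $[u]_{W_s^{\alpha,G}(X,d,\mu)}=1$, so that $\Phi_s^{\alpha,G}(u)\le 1$. By Lemma \ref{2lem8}, there is then a null set $H\subset X$ such that $\widetilde{u}(x):=\lim_{r\to 0}m_u(B(x,r))$ exists and equals $u(x)$ on $X\setminus H$; once the H\"older estimate is proved on $X\setminus H$, uniform continuity forces $\widetilde{u}$ to extend continuously to all of $X$, since $X\setminus H$ is dense (balls have positive measure).

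Next, I would apply \eqref{2eq15} with $i=0$, $z\in X\setminus H$ fixed and let $j\to\infty$. Writing $G^{-1}(t)\le t^{1/p_0}G^{-1}(1)$ for $t\le 1$ (which is the hypothesis with $y=1$), each term in the series becomes $G^{-1}(1)\,R^{\alpha-s/p_0}\,2^{-l(\alpha-s/p_0)}$. Since $\alpha p_0>s$, the geometric series converges and, using the normalisation $[u]=1$, gives a pointwise "remainder" bound
\begin{equation*}
|\widetilde{u}(z)-m_u(B(z,R))|\le C_1\,R^{\alpha-s/p_0}\qquad\text{for all }z\in X\setminus H,\ R\in(0,1),
\end{equation*}
with $C_1=C_1(b,s,\alpha,p_0,G)$.

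For $x,y\in X\setminus H$ with $r:=d(x,y)<1/3$ I would then split
\begin{equation*}
|\widetilde{u}(x)-\widetilde{u}(y)|\le |\widetilde{u}(x)-m_u(B(x,r))|+|m_u(B(x,r))-m_u(B(y,r))|+|m_u(B(y,r))-\widetilde{u}(y)|.
\end{equation*}
The two outer pieces are bounded by $C_1 r^{\alpha-s/p_0}$ via the previous step. For the cross-centre piece I would use \eqref{2eq9} on $B(y,r)$ with constant $c=m_u(B(x,r))$, then \eqref{2eq12} pointwise in $w\in B(y,r)$ to replace $G(|u(w)-m_u(B(x,r))|)$ by $2br^{-s}\int_{B(x,r)}G(|u(w)-u(w')|)\,d\mu(w')$. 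Using the lower Ahlfors bound $\mu(B(y,r))\ge r^s/b$, the inequality \eqref{1GG4} to write $G(|u(w)-u(w')|)\le d(w,w')^{\alpha p_0}G(|u(w)-u(w')|/d(w,w')^\alpha)$ (valid because $d(w,w')<3r<1$), and then inserting $d(w,w')^s$ to recognise the Gagliardo modular $\Phi_s^{\alpha,G}(u)\le 1$, the argument of $G^{-1}$ collapses to a constant times $r^{\alpha p_0-s}$. A final application of the hypothesis on $G^{-1}$ converts this into $C_2\,r^{\alpha-s/p_0}$; the regime in which the argument exceeds $1$ forces $r$ to be bounded away from $0$, so it can be absorbed into a larger constant.

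The main obstacle is this cross-centre comparison, since Proposition \ref{prop2.8} only handles balls with a common centre; bridging through a third ball would merely shift the problem. The constraint $r<1/3$ in the theorem statement is dictated precisely by this step: it is exactly what guarantees that every pair $(w,w')\in B(y,r)\times B(x,r)$ satisfies $d(w,w')<1$, so that the double integral is genuinely controlled by the Gagliardo modular and hence by the normalised seminorm.
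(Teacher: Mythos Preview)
Your proposal is correct and follows essentially the same route as the paper: obtain the pointwise remainder bound $|\widetilde u(z)-m_u(B(z,R))|\lesssim R^{\alpha-s/p_0}$ from the telescoping estimate \eqref{2eq15}, then split $|\widetilde u(x)-\widetilde u(y)|$ into two remainder terms and a cross--centre term, handling the latter by chaining \eqref{2eq9} with \eqref{2eq12} and \eqref{1GG4}, exploiting $d(w,w')<3r<1$ to land inside the Gagliardo modular. The only cosmetic difference is that you normalise to $[u]_{W_s^{\alpha,G}}=1$ at the outset (legitimate by $1$-homogeneity), whereas the paper keeps $[u]$ explicit and splits into the cases $[u]\le 1$ and $[u]\ge 1$ when pulling the seminorm out of $G^{-1}$; your remark about the ``argument $\ge 1$'' regime being absorbable into the constant plays the same role.
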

\begin{proof}
	Let $z \in X$ and $0\leq R < \frac{1}{3}$. Since $\alpha p_{0}>s$, the inequality \eqref{2eq15} implies that  for all $i,j\in \mathbb{N}$ such that $0 \leq i<j$, we have
	\begin{equation}\label{3eq2}
	\left|m_u\left(B\left(z, \frac{R}{2^i}\right)\right)-m_u\left(B\left(z, \frac{R}{2^j}\right)\right)\right| \leq \widetilde{A} [u]_{W^{\alpha, G}_{s}(X,d,\mu)}\frac{R^{\alpha -\frac{s}{p_{0}}}}{2^{i\left(\alpha -\frac{s}{p_{0}} \right)}},
	\end{equation}
	where $\tilde{A}=G^{-1}(1)A /\displaystyle\left(1-2^{\frac{s}{p_{0}}-\alpha}\right)>0$. Therefore, we get that $\left\{m_u\left(B\left(z, \frac{R}{2^k}\right)\right)\right\}_{k=0}^{\infty}$ is a Cauchy sequence, thus is convergent. By Lemma \ref{2lem8} we have that $m_u\left(B\left(x, \frac{R}{2^k}\right)\right)$ converges to $u(x)$ for every $x \in X \backslash H$, where $\mu(H)=0$.\newline
	Let us take $i=0$ in \eqref{3eq2} and pass to the limit $j \rightarrow \infty$, then we get for all $z \in X \backslash H$
	\begin{equation}\label{3eq3}
	\left|m_u(B(z, R))-u(z)\right| \leq \widetilde{A} [u]_{W^{\alpha, G}_{s}(X,d,\mu)}R^{\alpha -\frac{s}{p_{0}}} .
	\end{equation}
	Let $z, w \in X \backslash H$ be such that $0<d(z, w)<\frac{1}{3}$ and let $R=d(z, w)$. Then, by \eqref{3eq3} we have 
	\begin{equation}\label{3eq4}
	\begin{aligned}
	|u(z)-u(w)| & \leq\left|u(z)-m_u(B(z, R))\right|+\left|u(w)-m_u(B(w, R))\right|\\
	&+\left|m_u(B(z, R))-m_u(B(w, R))\right| \\
	& \leq 2 \widetilde{A} [u]_{W^{\alpha, G}_{s}(X,d,\mu)} d(z, w)^{\alpha-\frac{s}{p_{0}}}\\&+\left|m_u(B(z, R))-m_u(B(w, R))\right| .
	\end{aligned}
	\end{equation}
	
 By the inequalities \eqref{2eq9} and \eqref{2eq12} we have
	\begin{equation}\label{3eqq44}
	\begin{aligned}
	&\left|m_u(B(z, R))-m_u(B(w, R))\right| \\
	& \leq G^{-1}\left(2 \fint_{B(z, R)} G\left|u(x)-m_u(B(w, R))\right|) d \mu(x)\right) \\
	& \leq G^{-1}\left(2 \fint_{B(z, R)} 2 b R^{-s} \int_{B(w, R)}G(|u(x)-u(y)|) d \mu(y) d \mu(x)\right) \\
	& \leq G^{-1}\left(4 b^2 R^{-2 s} \int_{B(z, R)} \int_{B(w, R)}G(|u(x)-u(y)|) d \mu(y) d \mu(x)\right).
	\end{aligned}
	\end{equation}
	Then by using the inequalities \eqref{3eqq44} and \eqref{1GG4} with $\gamma=d(x, y)^{\alpha}\in (0,1]$ and the estimate $d(x, y) \leq d(x, z)+d(z, w)+d(w, y) \leq 3 R$, we obtain 
	\begin{equation}\label{eqq3.54}
	\begin{aligned}
	&\left|m_u(B(z, R))-m_u(B(w, R))\right|\\
	 & \leq G^{-1}\left(4 b^2 R^{-2 s} \int_{B(z, R)} \int_{B(w, R)}G\left( \frac{|u(x)-u(y)|}{d(x, y)^{\alpha}}\right)d(x, y)^{\alpha p_{0}+s}\frac{d \mu(y) d \mu(x)}{d(x, y)^{s}}  \right)\\
	& \leq G^{-1}\left(4 b^2 3^{s+\alpha p_{0}} R^{\alpha p_{0}-s} \int_{B(z, R)} \int_{B(w, R)} G\left( \frac{|u(x)-u(y)|}{d(x, y)^{\alpha}}\right)  \frac{d \mu(y) d \mu(x)}{d(x, y)^{s}} \right)\\
	& \leq B d(z, w)^{\alpha-\frac{s}{p_{0}}}G^{-1}\left(\iint_{d(x, y)<1} G\left( \frac{|u(x)-u(y)|}{d(x, y)^{\alpha}}\right)  \frac{d \mu(y) d \mu(x)}{d(x, y)^{s}}\right),
	\end{aligned}
	\end{equation}
	where $B=\max\left\lbrace \left( 4 b^2 3^{s+\alpha p_{0}}\right) ^{\frac{1}{p_{0}}},\left( 4 b^2 3^{s+\alpha p_{0}}\right) ^{\frac{1}{p^{0}}}\right\rbrace $.\newline
	
	If $[u]_{W^{\alpha, G}_{s}(X,d,\mu)}=0$,  the inequality \eqref{3eq1} is trivial.\newline
	
We suppose that $[u]_{W^{\alpha, G}_{s}(X,d,\mu)}\in(0,1]$, then from the inequalities \eqref{eqq3.54} and \eqref{1GG4} we get that
	$$
	\begin{aligned}
	&\left|m_u(B(z, R))-m_u(B(w, R))\right|\\ & \leq B d(z, w)^{\alpha-\frac{s}{p_{0}}}\\
	&\times G^{-1}\left(\iint_{d(x, y)<1} G\left( \frac{|u(x)-u(y)|}{[u]_{W^{\alpha, G}_{s}(X,d,\mu)}d(x, y)^{\alpha}}\right)[u]_{W^{\alpha, G}_{s}(X,d,\mu)}^{p_{0}}  \frac{d \mu(y) d \mu(x)}{d(x, y)^{s}}\right)\\
	& \leq B d(z, w)^{\alpha-\frac{s}{p_{0}}}[u]_{W^{\alpha, G}_{s}(X,d,\mu)}\\
	&\times G^{-1}\left(\iint_{d(x, y)<1} G\left( \frac{|u(x)-u(y)|}{[u]_{W^{\alpha, G}_{s}(X,d,\mu)}d(x, y)^{\alpha}}\right)  \frac{d \mu(y) d \mu(x)}{d(x, y)^{s}}\right)\\
	& \leq G^{-1}(1) B d(z, w)^{\alpha-\frac{s}{p_{0}}}[u]_{W^{\alpha, G}_{s}(X,d,\mu)}.
	\end{aligned}
	$$
	
	Now we suppose that $[u]_{W^{\alpha, G}_{s}(X,d,\mu)}\geq 1$, then from the inequalities \eqref{eqq3.54} and \eqref{1GG3} we get that
	
	$$
	\begin{aligned}
	&\left|m_u(B(z, R))-m_u(B(w, R))\right|\\ & \leq B d(z, w)^{\alpha-\frac{s}{p_{0}}}\\
	&\times G^{-1}\left(\iint_{d(x, y)<1} G\left([u]_{W^{\alpha, G}_{s}(X,d,\mu)} \frac{|u(x)-u(y)|}{[u]_{W^{\alpha, G}_{s}(X,d,\mu)}d(x, y)^{\alpha}}\right)  \frac{d \mu(y) d \mu(x)}{d(x, y)^{s}}\right)
	\\ & \leq B d(z, w)^{\alpha-\frac{s}{p_{0}}} \\
	&\times G^{-1}\left(\iint_{d(x, y)<1} G\left( \frac{|u(x)-u(y)|}{[u]_{W^{\alpha, G}_{s}(X,d,\mu)}d(x, y)^{\alpha}}\right)[u]_{W^{\alpha, G}_{s}(X,d,\mu)}^{p^{0}}  \frac{d \mu(y) d \mu(x)}{d(x, y)^{s}}\right)
	\end{aligned}
	$$
	
	$
	\begin{aligned}
	& \leq B d(z, w)^{\alpha-\frac{s}{p_{0}}}[u]_{W^{\alpha, G}_{s}(X,d,\mu)}\\
	&\times G^{-1}\left(\iint_{d(x, y)<1} G\left( \frac{|u(x)-u(y)|}{[u]_{W^{\alpha, G}_{s}(X,d,\mu)}d(x, y)^{\alpha}}\right)  \frac{d \mu(y) d \mu(x)}{d(x, y)^{s}}\right)\\
	& \leq G^{-1}(1) B d(z, w)^{\alpha-\frac{s}{p_{0}}}[u]_{W^{\alpha, G}_{s}(X,d,\mu)}.
	\end{aligned}
	$\newline
	Then in both cases we have $$\left|m_u(B(z, R))-m_u(B(w, R))\right| \leq G^{-1}(1) B d(z, w)^{\alpha-\frac{s}{p_{0}}}[u]_{W^{\alpha, G}_{s}(X,d,\mu)}.$$
	Then, by \eqref{3eq4} we get
	$$
	|u(z)-u(w)| \leq C [u]_{W^{\alpha, G}_{s}(X,d,\mu)} d(z, w)^{\alpha-\frac{s}{p_{0}}}
	$$
	for all $z, w \in X \backslash H$ such that $d(z, w)<\frac{1}{3}$, where $C =2 \widetilde{A}+G^{-1}(1)B$.\newline
	Now, we extend $u$ to a function $\widetilde{u}$ which is continuous on $X$ and satisfies \eqref{3eq1}.\\ Since $ 0 < \mu (B(x,r))  < \infty  $ for all $ x \in X $ and all $ r \in \left( 0,\infty\right)$, then the set $X \backslash H$ is dense in $X$. Hence, for $x \in H$, there exists $x_n \in X \backslash H$ converging to $x$.\\
 Then,
	$$
	\left|u\left(x_n\right)-u\left(x_m\right)\right| \leq C [u]_{W^{\alpha, G}_{s}(X,d,\mu)} d\left(x_n, x_m\right)^{\alpha-\frac{s}{p_{0}}} .
	$$
	Therefore, $\left\{u\left(x_n\right)\right\}_{n=1}^{\infty}$ is a Cauchy sequence in $\mathbb{R}$, thus is convergent, let $\widetilde{u}(x)$ denote its limit (which is independent of the choice of the sequence converging to $x$). For $x \in X \backslash H$ we put $\widetilde{u}(x)=u(x)$. Let us fix $x, y \in X$ such that $d(x, y)<\frac{1}{3}$ and let us take $x_n, y_n \in X \backslash H$ such that $x_n \rightarrow x$ and $y_n \rightarrow y$.\\
 Then,
	$$
	u\left(x_n\right) \rightarrow \widetilde{u}(x) \quad \textit{ and } \quad u\left(y_n\right) \rightarrow \widetilde{u}(y) .
	$$
	Hence,
	$$
	\begin{aligned}
	\left|\widetilde{u}(x)-\widetilde{u}(y)\right|& \leq\left|u\left(x_n\right)-\widetilde{u}(x)\right|+\left|u\left(y_n\right)-\widetilde{u}(y)\right|+\left|u\left(x_n\right)-u\left(y_n\right)\right|\\& \leq\left|u\left(x_n\right)-\widetilde{u}(x)\right|+\left|u\left(y_n\right)-\widetilde{u}(y)\right|\\&+C [u]_{W^{\alpha, G}_{s}(X,d,\mu)} d\left(x_n, y_n\right)^{\alpha-\frac{s}{p_{0}}} .
	\end{aligned}
	$$
	Then, by passing to the limit $n \rightarrow \infty$ we get
	$$
	\left|\widetilde{u}(x)-\widetilde{u}(y)\right| \leq C [u]_{W^{\alpha, G}_{s}(X,d,\mu)} d(x, y)^{\alpha-\frac{s}{p_{0}}},
	$$
which completes the proof.
\end{proof}
\begin{theorem}\label{3theo2}
Let $G$ be an Orlicz function such that $G^{-1}(xy)\leq x^{\frac{1}{p_{0}}}G^{-1}(y)$ if $x\leq 1$ and  $G^{-1}(xy)\leq x^{\frac{1}{p^{0}}}G^{-1}(y)$ if $x\geq 1$, and let $s,\alpha>0$ such that $\alpha p_{0}>s$. Then there is a positive constant $\widetilde{C}$ such that for all $u \in W_s^{\alpha, G}(X, d, \mu)$ we have
	\begin{equation}\label{3eqq5}
	\left\|\widetilde{u}\right\|_{C^{0, \alpha-\frac{s}{p_{0}}}(X, d)} \leq \widetilde{C}\|u\|_{W_s^{\alpha, G}(X, d, \mu)},
	\end{equation}
	where $\widetilde{u}$ is a continuous representative of $u$.
\end{theorem}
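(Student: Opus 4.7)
The $C^{0,\alpha-s/p_0}(X,d)$ norm decomposes as $\|\widetilde u\|_\infty$ plus the global H\"older seminorm, while Theorem \ref{3theo1} only delivers the H\"older estimate on pairs with $d(x,y)<1/3$. The plan is therefore to control $\|\widetilde u\|_\infty$ by $\|u\|_{W_s^{\alpha,G}(X,d,\mu)}$ first, and then to extend the H\"older bound to arbitrarily distant pairs via the triangle inequality.

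For the sup-norm bound I fix an auxiliary radius $r_0\in(0,1/3)$, say $r_0=1/4$. Inequality \eqref{3eq3}, proved en route to Theorem \ref{3theo1}, already gives
$$|\widetilde u(x)-m_u(B(x,r_0))|\leq \widetilde A\,[u]_{W_s^{\alpha,G}(X,d,\mu)}\,r_0^{\alpha-s/p_0}\qquad\text{for every } x\in X\setminus H.$$
To bound the remaining median, I assume $\lambda:=\|u\|_{L^G(X,\mu)}>0$ (otherwise $u\equiv 0$ and the claim is trivial). Applying \eqref{2eq9} with $c=0$ to $u/\lambda$, using the defining property $\int_X G(|u|/\lambda)\,d\mu\leq 1$ of the Luxemburg norm, and invoking the lower Ahlfors estimate $\mu(B(x,r_0))\geq r_0^s/b$, I obtain
$$|m_{u/\lambda}(B(x,r_0))|\leq G^{-1}\!\left(\frac{2}{\mu(B(x,r_0))}\right)\leq G^{-1}(2b/r_0^s).$$
Multiplying by $\lambda$ and using Proposition \ref{2pro2}(b) to identify $\lambda\, m_{u/\lambda}(B)=m_u(B)$ gives $|m_u(B(x,r_0))|\leq \|u\|_{L^G(X,\mu)}\,G^{-1}(2b/r_0^s)$. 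Inserting this into the previous display and then extending the resulting pointwise bound by density from $X\setminus H$ to all of $X$ (exactly as at the end of the proof of Theorem \ref{3theo1}) yields $\|\widetilde u\|_\infty\leq C_1\|u\|_{W_s^{\alpha,G}(X,d,\mu)}$ for an explicit constant $C_1$.

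Finally, for pairs $x,y\in X$ with $d(x,y)\geq 1/3$, positivity of $\alpha-s/p_0$ gives $d(x,y)^{\alpha-s/p_0}\geq 3^{-(\alpha-s/p_0)}$, hence
$$\frac{|\widetilde u(x)-\widetilde u(y)|}{d(x,y)^{\alpha-s/p_0}}\leq 2\cdot 3^{\alpha-s/p_0}\|\widetilde u\|_\infty\leq 2\cdot 3^{\alpha-s/p_0}\,C_1\,\|u\|_{W_s^{\alpha,G}(X,d,\mu)}.$$
Combined with \eqref{3eq1} on the regime $d(x,y)<1/3$ (where $[u]_{W_s^{\alpha,G}(X,d,\mu)}\leq\|u\|_{W_s^{\alpha,G}(X,d,\mu)}$) and with the sup-norm bound above, this yields \eqref{3eqq5} with an explicit $\widetilde C$. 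The only delicate step is the sup-norm estimate: one must scale by $\lambda$ so that the $L^G$ norm enters linearly, which is precisely where the homogeneity of the median under positive scalars (Proposition \ref{2pro2}(b)) becomes indispensable, and the choice $r_0<1/3$ is what allows \eqref{3eq3} to apply alongside the lower Ahlfors control $\mu(B(x,r_0))\geq r_0^s/b$ with constants uniform in $x$.
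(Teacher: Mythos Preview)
Your argument is correct and follows the same overall architecture as the paper: bound $\|\widetilde u\|_\infty$ by combining a local oscillation estimate with a pointwise control at one reference value, then upgrade the short-range H\"older bound from Theorem~\ref{3theo1} to all pairs via the sup-norm.

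The one genuine technical difference lies in how you control the reference value. The paper selects, for each $z$, a ``good point'' $w\in B(z,1/3)$ at which $G(|\widetilde u(w)|)$ lies below the integral average of $G(|u|)$ over the ball, and then splits into the two cases $\|u\|_{L^G}\le 1$ and $\|u\|_{L^G}\ge 1$, invoking the extra hypotheses on $G^{-1}$ (with exponents $1/p_0$ and $1/p^0$) to extract a bound linear in $\|u\|_{L^G}$. You instead bound the median $m_u(B(x,r_0))$ directly via \eqref{2eq9} applied to $u/\|u\|_{L^G}$, which already has unit modular; the scaling property of the median (Proposition~\ref{2pro2}(b)) then gives linearity in $\|u\|_{L^G}$ without any case distinction and without explicitly invoking the $G^{-1}$ hypotheses at this step. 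Your route is slightly more economical here; the paper's route has the minor advantage of not needing the fact that the Luxemburg infimum is attained (i.e.\ $\int_X G(|u|/\|u\|_{L^G})\,d\mu\le 1$), though that fact is standard under $\Delta_2$.
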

\begin{proof}
	Let $z \in X$, then there exists $w \in B(z, \frac{1}{3})$ such that 
	
	\begin{equation}\label{3eqq6}
	G\left( \left|\widetilde{u}(w)\right|\right)  \leq\frac{1}{\mu(B(z,\frac{1}{3}))} \int_{B(z,  \frac{1}{3})}G\left(|u(y)|\right) d \mu(y).
	\end{equation}
	If $\|u\|_{L^{G}(X, \mu)}=0$, the inequality \eqref{3eqq5}  is trivial.\newline
	We suppose that $\|u\|_{L^{G}(X, \mu)}\in (0,1]$, then by \eqref{3eqq6} and \eqref{1GG4} we have
	$$
	\begin{aligned}
	\left|\widetilde{u}(w)\right| &\leq G^{-1}\left(\frac{1}{\mu(B(z,\frac{1}{3}))} \int_{B(z,\frac{1}{3})}G\left(|u(y)|\right) d \mu(y)\right)\\ &\leq G^{-1}\left(\frac{1}{\mu(B(z,\frac{1}{3}))} \int_{B(z,\frac{1}{3})}G\left(\frac{|u(y)|}{\|u\|_{L^{G}(X, \mu)}}\right) \|u\|_{L^{G}(X, \mu)}^{p_{0}} d \mu(y)\right)\\
	&\leq G^{-1}\left(b3^{s} \int_{B(z,\frac{1}{3})}G\left(\frac{|u(y)|}{\|u\|_{L^{G}(X, \mu)}}\right) \|u\|_{L^{G}(X, \mu)}^{p_{0}} d \mu(y)\right)
	\\& \leq C_{1} \|u\|_{L^G(X, \mu)}G^{-1}\left( \int_{B(z,\frac{1}{3})}G\left(\frac{|u(y)|}{\|u\|_{L^{G}(X, \mu)}}\right) d \mu(y)\right)  \\ &\leq C_{1} G^{-1}(1) \|u\|_{L^G(X, \mu)},
	\end{aligned}
	$$ 
	where $C_{1}=\max\left\lbrace b^{\frac{1}{p_{0}}} 3^{\frac{s}{p_{0}}},b^{\frac{1}{p^{0}}} 3^{\frac{s}{p^{0}}}\right\rbrace .$\newline
	Now we suppose that $\|u\|_{L^{G}(X, \mu)}\geq 1$, then by \eqref{3eqq6} and \eqref{1GG3} we have
	
	$$
	\begin{aligned}
	\left|\widetilde{u}(w)\right| &\leq G^{-1}\left(\frac{1}{\mu(B(z,\frac{1}{3}))} \int_{B(z,\frac{1}{3})}G\left(|u(y)|\right) d \mu(y)\right)\\ &=G^{-1}\left(\frac{1}{\mu(B(z,\frac{1}{3}))} \int_{B(z,\frac{1}{3})}G\left(\|u\|_{L^{G}(X, \mu)}\frac{|u(y)|}{\|u\|_{L^{G}(X, \mu)}}\right) d \mu(y)\right)\\
	& \leq G^{-1}\left(b 3^{s} \int_{B(z,\frac{1}{3})}G\left(\frac{|u(y)|}{\|u\|_{L^{G}(X, \mu)}}\right) \|u\|_{L^{G}(X, \mu)}^{p^{0}} d \mu(y)\right)
	\\&\leq C_{1}\|u\|_{L^G(X, \mu)}G^{-1}\left( \int_{B(z,\frac{1}{3})}G\left(\frac{|u(y)|}{\|u\|_{L^{G}(X, \mu)}}\right) d \mu(y)\right)  \\ &\leq C_{1} G^{-1}(1) \|u\|_{L^G(X, \mu)}.
	\end{aligned}
	$$
	Then in both cases, we have 
	\begin{equation}\label{3eqqq61}
	 \left|\widetilde{u}(w)\right| \leq C_{2} \|u\|_{L^G(X, \mu)}, 
	\end{equation}
	where $C_{2}=C_{1} G^{-1}(1).$ Then, by Theorem \ref{3theo1} and the inequality \eqref{3eqqq61}, we have  
	$$
	\begin{aligned}
	\left|\widetilde{u}(z)\right| &\leq\left|\widetilde{u}(z)-\widetilde{u}(w)\right|+\left|\widetilde{u}(w)\right|\\ & \leq C[u]_{W_s^{\alpha, G}(X,d,\mu)} d(z, w)^{\alpha-\frac{s}{p_{0}}}+C_{2}\|u\|_{L^G(X, \mu)} \\
	& \leq C_3\|u\|_{W_s^{\alpha, G}(X, d, \mu)},
	\end{aligned}
	$$
	where $C_3=\max \left\{C 3^{\frac{s}{p_{0}}-\alpha}, C_{2}\right\}$. Hence,
	$$
	\left\|\widetilde{u}\right\|_{C(X, d)} \leq C_3\|u\|_{W_s^{\alpha, G}(X, d, \mu)}.
	$$
	By using Theorem \ref{3theo1} we can control the H\"older semi-norm of $\widetilde{u}$ for all $z, w$ such that $d(z, w)<\frac{1}{3}$. Now, let us estimate the whole H\"older's semi-norm.\\ Let $z, w \in X$ such that $d(z, w) \geq \frac{1}{3}$, then
	\begin{equation}\label{3eqq61}
	\begin{aligned}
	\left|\widetilde{u}(z)-\widetilde{u}(w)\right| & \leq 2\left\|\widetilde{u}\right\|_{C(X, d)} \\
	& =2.3^{\alpha-\frac{s}{p_{0}}}\left\|\widetilde{u}\right\|_{C(X, d)} \frac{1}{3^{\alpha-\frac{s}{p_{0}}}} \\
	& \leq 2.3^{\alpha-\frac{s}{p_{0}}} C_3\|u\|_{W_s^{\alpha, G}(X, d, \mu)} d(z, w)^{\alpha-\frac{s}{p_{0}}}.
	\end{aligned}
	\end{equation}
	Then by Theorem \ref{3theo1} and \eqref{3eqq61} we have $$\sup _{x\neq y\in X} \frac{\left|\widetilde{u}(x)-\widetilde{u}(y)\right|}{d(x, y)^{\alpha-\frac{s}{p_{0}}}}\leq C_4\|u\|_{W_s^{\alpha, G}(X, d, \mu)}, $$
	where $C_{4}=\max\left\lbrace C,2.3^{\alpha-\frac{s}{p_{0}}} C_3\right\rbrace .$\newline
	Hence, we have 
	$$\begin{aligned}
	\left\|\widetilde{u}\right\|_{C^{0, \alpha-\frac{s}{p_{0}}}(X, d)}&=\left\|\widetilde{u}\right\|_{C(X, d)}+\sup _{x\neq y\in X} \frac{\left|\widetilde{u}(x)-\widetilde{u}(y)\right|}{d(x, y)^{\alpha-\frac{s}{p_{0}}}}\\
	&\leq C_3\|u\|_{W_s^{\alpha, G}(X, d, \mu)} + C_4\|u\|_{W_s^{\alpha, G}(X, d, \mu)}\\
	&=\widetilde{C}\|u\|_{W_s^{\alpha, G}(X, d, \mu)},
	\end{aligned}
	$$
	where $\widetilde{C}=C_{3}+C_{4}$.\newline
	The proof is complete.
\end{proof}
\begin{theorem} \label{3theo3}
Let $\alpha>0$, $G$ be an Orlicz function such that  $G^{-1}(xy)\leq x^{\frac{1}{p_{0}}}G^{-1}(y)$ if $x\leq 1$ and  $G^{-1}(xy)\leq x^{\frac{1}{p^{0}}}G^{-1}(y)$ if $x\geq 1$ and suppose that $(X, d, \mu)$ is a lower Ahlfors $\theta$-regular space, where $\theta \geq 0$ and $\alpha p_{0}>\theta-s$. Then, the continuous embedding $W_s^{\alpha, G}(X, d, \mu) \hookrightarrow M^{\beta, G}(X, d, \mu)$ holds, where $\beta=\alpha+(s-\theta) / p_{0}$.
\end{theorem}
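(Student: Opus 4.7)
The plan is to construct an explicit generalized $\beta$-gradient $h \in L^G(X,\mu)$ for $u \in W_s^{\alpha,G}(X,d,\mu)$, with $\|h\|_{L^G(X,\mu)}$ controlled by $\|u\|_{W_s^{\alpha,G}(X,d,\mu)}$. Setting $\beta p_0 = \alpha p_0 + s - \theta > 0$, the natural candidate is
\[
h(x) := C_1\, G^{-1}\!\left(E(x)\right) + C_2\, |u(x)|,
\]
where $E(x) := \int_{B(x,1)} G\!\left(|u(x)-u(y)|/d(x,y)^{\alpha}\right)\, d\mu(y)/d(x,y)^{s}$. By Fubini, $\int_X E(x)\, d\mu(x) = \Phi_s^{\alpha,G}(u)$, so the $\Delta_2$-property combined with the homogeneity estimate \eqref{1GG3} will immediately give $h \in L^G(X,\mu)$ with $\Phi_G(h)$ controlled by $\Phi_s^{\alpha,G}(u) + \Phi_G(u)$.

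The heart of the argument is showing that $h$ is a $\beta$-gradient of $u$. Under the lower Ahlfors $\theta$-regular assumption, inequality \eqref{2eq12} reads $G(|m_u(B(z,r)) - u(t)|) \leq 2b r^{-\theta} \int_{B(z,r)} G(|u(t)-u(y)|)\, d\mu(y)$, while \eqref{2eq9} is unchanged. I would fix $x, y \in X \setminus H$ (with $H$ the $\mu$-null set from Lemma \ref{2lem8}), set $R = d(x,y)$, and split into two regimes. For $R \leq R_0$ with $R_0$ a small constant depending only on the data, I would apply \eqref{2eq9} with $c = u(x)$ on the ball $B(x,R)$ and with $c = u(y)$ on the same ball, so that
\[
|u(x) - u(y)| \leq |u(x) - m_u(B(x,R))| + |m_u(B(x,R)) - u(y)|.
\]
Factoring out $d(x,s)^{\alpha}$ inside the resulting averages via \eqref{1GG4}, enlarging $B(x,R)$ to $B(y,2R) \subseteq B(y,1)$ where needed, and using $d(x,s)^s \leq (2R)^s$ to restore the Gagliardo denominator, I would obtain bounds of the shape $G(\,\cdot\,) \leq C R^{\beta p_0} E(\cdot)$ for each of the two terms. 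The hypothesis $G^{-1}(xy) \leq x^{1/p_0}G^{-1}(y)$ for $x \leq 1$ then converts $G^{-1}(CR^{\beta p_0} E)$ into $C^{1/p_0} R^{\beta} G^{-1}(E)$, which is exactly the Hajłasz form with exponent $\beta$. For the complementary regime $R \geq R_0$, I would use the trivial bound $|u(x) - u(y)| \leq |u(x)| + |u(y)| \leq R_0^{-\beta} R^{\beta} (|u(x)| + |u(y)|)$, which is absorbed into the $|u|$ summand of $h$.

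For the norm estimate I would exploit the homogeneity of the problem by renormalizing: writing $v = u/\lambda$ with $\lambda = \|u\|_{W_s^{\alpha,G}(X,d,\mu)}$ brings us to the case $\Phi_G(v) \leq 1$ and $\Phi_s^{\alpha,G}(v) \leq 1$, in which the construction above produces a $\beta$-gradient $h_v$ of $v$ with $\int_X G(h_v)\, d\mu$ bounded by a universal constant, hence $\|h_v\|_{L^G(X,\mu)} \leq K$ for some universal $K$. Since $\lambda h_v$ is then a generalized $\beta$-gradient of $u$, the continuous embedding follows with the explicit constant $1+K$. I expect the main obstacle to be the bookkeeping of the homogeneity relations \eqref{1GG1}--\eqref{1GG4} when pushing the scaling factors $C_1 R^{\beta p_0}$, $E(x)$ and $\lambda$ through $G$ and $G^{-1}$: each may be less than or greater than $1$, and the correct inequality of Lemma \ref{2lem6} must be chosen in each configuration, paralleling the systematic case splits handled in Proposition \ref{prop2.8} and Theorems \ref{3theo1}--\ref{3theo2}. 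The normalization $u \mapsto u/\lambda$ should collapse most of this tension by reducing to a regime where the relevant modulars are bounded by $1$.
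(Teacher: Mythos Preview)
Your proposal is correct and follows essentially the same route as the paper's proof: the same null set $H$, the same near/far dichotomy (the paper takes $R_0=\tfrac12$), the same median-value splitting $|u(\xi)-u(\eta)|\leq |u(\xi)-m_u(B(\xi,R))|+|m_u(B(\xi,R))-u(\eta)|$ combined with \eqref{2eq9}, the lower Ahlfors bound, \eqref{1GG4}, and the $G^{-1}$ homogeneity hypotheses to reach $C\,d(\xi,\eta)^{\beta}(f(\xi)+f(\eta))$ with $f=G^{-1}(E)$; the paper then takes $h=\max\{Cf,\,2^{\beta}|u|\}$ where you take the equivalent sum. Your explicit normalization $u\mapsto u/\lambda$ for the norm estimate is slightly more detailed than the paper, which simply asserts continuity after observing $f\in L^G$.
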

\begin{proof}
	Let $u \in W_s^{\alpha, G}(X, d, \mu)$ and let us consider the following set 
	$$
	H=\left\{x \in X: \int_{B(x, 1)} G\left( \frac{|u(x)-u(y)|}{d(x, y)^{\alpha}}\right) \frac{ d \mu(y)}{d(x, y)^{s}} =+\infty \text { or }|u(x)|=+\infty\right\}.
	$$
	It is clear that $\mu(H)=0$. Let $\xi, \eta \in X \backslash H$.\newline
	If $d(\xi, \eta)<\frac{1}{2}$, then from the inequalities \eqref{2eq9} and \eqref{1GG4} we have
	
	$$
	\begin{aligned}
	&|u(\xi)-u(\eta)| \\& \leq\left|u(\xi)-m_u(B(\xi, d(\xi, \eta)))\right|+\left|m_u(B(\xi, d(\xi, \eta)))-u(\eta)\right| \\
	& \leq G^{-1}\left(2 \fint_{B(\xi, d(\xi, \eta))}G\left( |u(\xi)-u(y)|\right)  d \mu(y)\right)\\
	&+G^{-1}\left(2 \fint_{B(\xi, d(\xi, \eta))}G\left( |u(\eta)-u(y)|\right)  d \mu(y)\right)\\
	&= G^{-1}\left( \frac{2}{\mu(B(\xi, d(\xi, \eta)))} \int_{B(\xi, d(\xi, \eta))}G\left( |u(\xi)-u(y)|\right)  d \mu(y)\right)\\
	&+G^{-1}\left( \frac{2}{\mu(B(\xi, d(\xi, \eta)))} \int_{B(\xi, d(\xi, \eta))}G\left( |u(\eta)-u(y)|\right)  d \mu(y)\right)
	\\
	& \leq G^{-1}\left(2bd(\xi, \eta)^{-\theta}\int_{B(\xi, d(\xi, \eta))} G\left( \frac{|u(\xi)-u(y)|}{d(\xi, y)^{\alpha}}\right)  d(\xi, \eta)^{s+\alpha p_{0}} \frac{d \mu(y)}{d(\xi, y)^{s}} \right)\\
	&+ G^{-1}\left(2bd(\xi, \eta)^{-\theta}\int_{B(\eta, 2d(\xi, \eta))} G\left(\frac{|u(\eta)-u(y)|}{d(\eta, y)^{\alpha }}\right)(2 d(\xi, \eta))^{s+\alpha p_{0}} \frac{d \mu(y)}{d(\eta, y)^{s}}\right)
	\\
	&\leq  G^{-1}\left(2bd(\xi, \eta)^{-\theta+s+\alpha p_{0}}\int_{B(\xi, d(\xi, \eta))} G\left( \frac{|u(\xi)-u(y)|}{d(\xi, y)^{\alpha}}\right) \frac{d \mu(y)}{d(\xi, y)^{s}} \right)\\
	&+G^{-1}\left(2b.2^{s+\alpha p_{0}}d(\xi, \eta)^{-\theta+s+\alpha p_{0}} \int_{B(\eta, 2 d(\xi, \eta))} G\left(\frac{|u(\eta)-u(y)|}{d(\eta, y)^{\alpha }}\right) \frac{d \mu(y)}{d(\eta, y)^{s}}\right)\\
	&\leq C G^{-1}\left(d(\xi, \eta)^{-\theta+s+\alpha p_{0}}\int_{B(\xi, d(\xi, \eta))} G\left( \frac{|u(\xi)-u(y)|}{d(\xi, y)^{\alpha}}\right) \frac{d \mu(y)}{d(\xi, y)^{s}} \right)\\
	&+ CG^{-1}\left(d(\xi, \eta)^{-\theta+s+\alpha p_{0}} \int_{B(\eta, 2 d(\xi, \eta))} G\left(\frac{|u(\eta)-u(y)|}{d(\eta, y)^{\alpha }}\right) \frac{d \mu(y)}{d(\eta, y)^{s}}\right)\\
	& \leq C d(\xi, \eta)^{\beta}(f(\xi)+f(\eta)),
	\end{aligned}
	$$
	where $$C=\max\left\lbrace \left( b.2^{s +\alpha p_{0} +1}\right)^{\frac{1}{p_{0}}},\left( b.2^{s +\alpha p_{0}+1}\right)^{\frac{1}{p^{0}}}\right\rbrace, $$ $$\beta= \alpha+(s-\theta) / p_{0}$$ and
	$$
	f(x)=G^{-1}\left(\int_{B(x, 1)} G\left(\frac{|u(x)-u(y)|}{d(x, y)^{\alpha}}\right)\frac{d \mu(y)}{d(x, y)^{s}}\right) \in L^G(X, \mu) .
	$$
	If $d(\xi, \eta) \geq \frac{1}{2}$, then we have
	$$
	|u(\xi)-u(\eta)| \leq|u(\xi)|+|u(\eta)| \leq 2^\beta d(\xi, \eta)^\beta\left(|u(\xi)|+|u(\eta)|\right) .
	$$
	Hence, the function defined by $h(x)=\max \left\{C f(x), 2^\beta|u(x)|\right\}$ is a generalized $\beta$-gradient of $u$ and we get that the space $W_s^{\alpha, G}(X, d, \mu)$ is continuously embedded in the space $M^{\beta, G}(X, d, \mu)$. The proof is complete.
\end{proof}

\bibliographystyle{plain}

\end{document}